\documentclass[11.5pt]{amsart}
\usepackage[letterpaper]{geometry}
\usepackage{amsmath,amsfonts,amsthm,amscd,amssymb,mathrsfs,amssymb, multirow, pst-node, comment,enumitem}
\usepackage{stmaryrd}
\usepackage{graphicx}
\usepackage{verbatim}
\usepackage{mathtools}
\usepackage{tikz-cd} 
\newtheorem{theorem}{Theorem}

\newtheorem{proposition}[theorem]{Proposition}
\newtheorem{lemma}[theorem]{Lemma}

\numberwithin{figure}{section}
\usepackage[colorlinks, linkcolor=blue, anchorcolor=black, citecolor=red]{hyperref}
\theoremstyle{definition}

\newtheorem{remark}[theorem]{Remark}

\newcommand{\ddc}{{\rm dd^c}}

\renewcommand{\max}{{\rm max}}

\newcommand{\tr}{{\rm tr}}

\newcommand*{\sheafhom}{\mathscr{H}\kern -.5pt om}
\newcommand*{\sheafext}{\mathscr{E}\kern -.5pt xt}

\newcommand{\PSH}{{\rm PSH}}

\makeatletter
\newcommand{\colim@}[2]{%
  \vtop{\m@th\ialign{##\cr
    \hfil$#1\operator@font colim$\hfil\cr
    \noalign{\nointerlineskip\kern1.5\ex@}#2\cr
    \noalign{\nointerlineskip\kern-\ex@}\cr}}%
}
\newcommand{\colim}{%
  \mathop{\mathpalette\colim@{\rightarrowfill@\scriptscriptstyle}}\nmlimits@
}
\renewcommand{\varprojlim}{%
  \mathop{\mathpalette\varlim@{\leftarrowfill@\scriptscriptstyle}}\nmlimits@
}
\renewcommand{\varinjlim}{%
  \mathop{\mathpalette\varlim@{\rightarrowfill@\scriptscriptstyle}}\nmlimits@
}
\makeatother

\renewcommand{\tilde}{\widetilde}

\numberwithin{equation}{section}
\numberwithin{theorem}{section}
\numberwithin{table}{section}
\numberwithin{table}{section}

\begin{document}
\bibliographystyle{amsalpha}
\title{Parabolic Monge-Amp\`ere equations on a family of Calabi-Yau manifolds}

\author{Jiyuan Han}
\address{Institute for Theoretical Sciences,
Westlake University,
No.600 Dunyu Road, 
Hangzhou, 310030, China}
\email{hanjiyuan@westlake.edu.cn}

\thanks{}
\date{\today}
\begin{abstract}
We prove a uniform $L^\infty$-estimate for the parabolic Monge--Amp\`ere equation under uniform H\"ormander-Tian type estimates with respect to the initial and prescribed measures. We apply this estimate to rescaled K\"ahler--Ricci flows on a family of polarized Calabi--Yau manifolds over the punctured disk. An interpolation argument gives uniform H\"ormander-Tian type estimates along the flows. We also establish uniform diameter bounds and Gromov--Hausdorff precompactness for this family of flows. In particular, we obtain an alternative proof of the diameter estimate by Li-Tosatti in \cite{LT2024}.
\end{abstract}
\maketitle
\setcounter{tocdepth}{1}

\section{Introduction}
\label{introduction}

The degeneration of polarized Calabi-Yau manifold over a punctured disk has been extensive studied by many experts. See \cite{GW00, KS04, NXY19, KX20, HSVZ22, LiYang22, LiYang24, LT2024, HJMM24, SZ24, LiYang25, TO25, LiYang26, BL26} for an incomplete list. In such a degeneration, the induced Fubini--Study metrics are closely related to a choice of model, while the Ricci-flat metrics describe the geometry of the Calabi--Yau fibers. This motivates studying an interpolation between these two families of metrics by K\"ahler--Ricci flows (parabolic Monge--Amp\`ere equations in the level of potential functions).

In the first part of the paper,
we will consider the case when $X$ is a general compact K\"ahler manifold of complex dimension $n$. Let $\omega$ be a K\"ahler form on $X$.
Let $\mu$ be a Radon measure on $X$ with a normalization $\int_X\mu = 1$. 

We say there is a uniform H\"ormander-Tian
type estimate with respect to the metric $\omega$ and the measure $\mu$ if there exist fixed constants $\alpha, A>0$, such that
\begin{align}
    \label{uniform_Skoda}
    \int_X e^{-\alpha(u-\sup_X u)}\; \mu < A
\end{align}
for any $u\in \PSH(X,\omega)$.

In the following theorem, we will prove the uniform estimates and the smooth convergence of the solution to the Monge-Amp\`ere flow without assuming the existence of the solution to the stationary Monge-Amp\`ere equation $  \frac{1}{V_\omega}(\omega+\ddc\varphi)^n  =e^F\mu $.
\begin{theorem}
    \label{main_thm_1}
    Let $X$ be a compact K\"ahler manifold of dimension $n$. Let $\omega$ be a K\"ahler form on $X$. Let $\mu$ be a Radon probability measure on $X$ with smooth positive density function. Assume that both $\frac{1}{V_\omega}\omega^n$ and $\mu$ satisfy the uniform H\"ormander-Tian inequality \eqref{uniform_Skoda}.
    Let $F\in C^\infty(X)$, $e^{F}\in L^1\log^pL(\mu)$, $p>n+1$, and $\int_X e^F\mu = 1$. Consider the following parabolic Monge-Amp\`ere flow
    \begin{align}
    \label{parablic_MA}
        \begin{cases}
            e^{\partial_t\varphi}\;e^F\mu = \frac{1}{V_\omega}(\omega+\ddc\varphi)^n, \\
            \varphi(0) = 0,
        \end{cases}
    \end{align}
    where $V_\omega = \int_X \omega^n$.
    Then there exists a constant $C = C(n, \alpha, p, A,  K:=\|e^F\|_{L^1\log^p(\mu)})$ such that 
    \begin{align*}
        \|\varphi\|_{L^\infty} \leq C
    \end{align*}
    on $X\times [0,+\infty)$.

    In addition, $\varphi$ converges to $\varphi_\infty\in C^\infty(X)$ smoothly, where $\varphi_\infty$ is the solution of
    \begin{align*}
        \frac{1}{V_\omega}(\omega+\ddc\varphi_\infty)^n = e^F\mu.
    \end{align*}
\end{theorem}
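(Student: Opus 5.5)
We bound $\dot\varphi:=\partial_t\varphi$ on both sides, then obtain a time-independent oscillation bound for $\varphi(t)$ from a Kołodziej-type estimate adapted to the measure $\mu$, and finally control $\sup_X\varphi(t)$.

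\emph{Upper bound on $\dot\varphi$.} Differentiating \eqref{parablic_MA} in $t$ gives $\partial_t\dot\varphi=\Delta_{\omega_t}\dot\varphi$ with $\omega_t=\omega+\ddc\varphi$. The maximum principle then yields $\sup_X\dot\varphi(t)\le\sup_X\dot\varphi(0)=\sup_X(-F)$. This bound depends on $F$ itself, not only on $K$, so it is not good enough. Instead I would use the standard functional $t\mapsto\int_X\dot\varphi\,\omega_t^n/V_\omega$. Alternatively, since $\dot\varphi$ is a heat solution, the quantity $\int_X e^{\dot\varphi}e^F\mu=1$ is preserved. I would also use the classical comparison for $\Phi=t\dot\varphi-\varphi-nt$: one has $(\partial_t-\Delta_{\omega_t})\Phi=-\operatorname{tr}_{\omega_t}\omega\le 0$, hence $t\dot\varphi\le\varphi+nt$.

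\emph{Lower bound on $\dot\varphi$ and integrability.} Similarly, $\Psi=\dot\varphi+\varphi$ satisfies $(\partial_t-\Delta_{\omega_t})\Psi=\dot\varphi-n+\operatorname{tr}_{\omega_t}\omega$. I would try the test function $(e^t-1)\dot\varphi-\varphi$ to get $\dot\varphi\ge -C(1+\operatorname{osc}\varphi)$ for $t\ge1$; for $t\le1$ the parabolic comparison with $\varphi\ge -C t$ handles it. The key point for the $L^\infty$ bound is that the density $f_t:=e^{\dot\varphi+F}$ of $\omega_t^n/V_\omega$ with respect to $\mu$ has $L^1\log^p L(\mu)$-norm bounded uniformly in $t$. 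This follows from the upper bound $\dot\varphi\le(\varphi+nt)/t\le C(1+\operatorname{osc}\varphi(t))$ for $t\ge1$ once we normalize suitably, together with $K$. To close the loop I would argue as follows. Using \eqref{uniform_Skoda} for $\mu$, the Kołodziej/Eyssidieux–Guedj–Zeriahi capacity argument with the entropy-type condition $p>n+1$ (as in Guo–Phong–Tong, or directly via the Hörmander–Tian inequality) gives
\begin{align*}
\operatorname{osc}_X\varphi(t)\le C\bigl(n,\alpha,p,A,\|f_t\|_{L^1\log^pL(\mu)}\bigr).
\end{align*}
Since $f_t\le e^F e^{C(1+\operatorname{osc}\varphi)/t}$, the dependence on $\operatorname{osc}\varphi$ is only through a factor that is sublinear for large $t$, which allows absorption. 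I expect this step, obtaining the $L^1\log^p L$ bound on $f_t$ without circularity, to be the main obstacle. My first attempt would avoid it by using $\sup_X\dot\varphi\le\sup\dot\varphi(0)$ only on a short interval $[0,1]$, where $\varphi$ is controlled by comparison, and the $\Phi$-bound for $t\ge1$.

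\emph{Controlling $\sup_X\varphi$.} Having $\operatorname{osc}\varphi(t)\le C$, I would control $\sup_X\varphi$ in two directions. Jensen with $\int e^{\dot\varphi}e^F\mu=1$ gives $\int\dot\varphi\,e^F\mu\le0$, which yields an upper bound. Monotonicity of the Monge–Ampère energy $E(\varphi(t))$, which satisfies $\frac{d}{dt}E=\int\dot\varphi\,\omega_t^n/V_\omega=\int\dot\varphi e^{\dot\varphi}e^F\mu\ge0$ by convexity, together with $E(\varphi)\le\sup\varphi$ and $E(\varphi)\ge\sup\varphi-C\operatorname{osc}\varphi$, gives the lower bound. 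The uniform $L^\infty$ bound on $\omega^n/V_\omega$ via \eqref{uniform_Skoda} is used here to compare $\sup\varphi$ with $\int\varphi\,\omega^n/V_\omega$.

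\emph{Convergence.} With $\|\varphi\|_{L^\infty}$ bounded, the Laplacian estimate of Yau–Cao (Chern–Lu type, using that $\mu$ has smooth positive density) and Evans–Krylov give $C^k$ bounds, depending on the geometry, on $[1,\infty)$. Since $\frac{d}{dt}E\ge0$ and $E$ is bounded, $\int_1^\infty\int\dot\varphi(e^{\dot\varphi}-1)e^F\mu\,dt<\infty$. The Li–Yau Harnack/Perelman-type argument, or simply the decay of the oscillation of $\dot\varphi$ under the heat flow (Phong–Sturm), then yields $\dot\varphi\to0$ exponentially. Hence $\varphi\to\varphi_\infty$ smoothly, and the limit solves the stationary equation $\frac{1}{V_\omega}(\omega+\ddc\varphi_\infty)^n=e^F\mu$.
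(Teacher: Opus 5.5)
Your proposal has a genuine gap exactly where you anticipated it: the $L^\infty$ bound is never obtained with a constant depending only on $(n,\alpha,p,A,K)$.

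On $[0,1]$ you fall back on $\sup_X\dot\varphi\le\sup_X\dot\varphi(0)$ and on $\varphi\ge -Ct$. But $\dot\varphi(0)=\log\frac{\omega^n}{V_\omega e^F\mu}$, not $-F$: the density of $\omega^n/V_\omega$ with respect to $\mu$ also enters. Nothing in the hypotheses controls this quantity. In the Calabi--Yau application it is in general unbounded as $s\to0$ (cf.\ Lemma~\ref{volume_ratio_bound}), which is precisely why uniformity matters. Equivalently, for small $t$ the density $f_t$ is close to $f_0=\frac{\omega^n}{V_\omega\mu}$, whose $L^1\log^pL(\mu)$-norm is uncontrolled. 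So no slice-by-slice elliptic estimate relative to $\mu$ can be uniform near $t=0$.

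For $t\ge1$ your absorption step is circular. The elliptic constant grows with $\|f_t\|_{L^1\log^pL(\mu)}$, and you only know $f_t\le e^Fe^{C(1+\operatorname{osc}\varphi)/t}$. For $t$ near $1$ this gives an inequality of the form $\operatorname{osc}\varphi\le\Theta(e^{c\operatorname{osc}\varphi})$ with $\Theta$ increasing, which yields no bound.

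The paper never controls $\dot\varphi$ pointwise. It first shows $0\le\sup_X\varphi\le C_3$, using monotonicity of $E$, the inequality $\int_X\varphi\,e^F\mu\le0$ that you also have, the generalized Young inequality, and \eqref{uniform_Skoda} for $\mu$. It then runs a space-time version of Guo--Phong--Tong:
\begin{itemize}
\item an auxiliary flow $-\partial_t\psi\cdot\frac{1}{V_\omega}\omega_\psi^n=fe^F\mu$ on $[t_0,t_1]$, whose energy gives $\sup_X\psi\ge-1$;
\item a parabolic maximum principle in which the factor $e^{\partial_t\varphi}$ is absorbed through $(n+1)e^{-x/(n+1)}+x-(n+1)\ge0$;
\item a Moser--Trudinger bound on the space-time sublevel sets $\Omega_{s,t_0,t_1}$;
\item a De Giorgi iteration.
\end{itemize}

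Your own ingredients can also be repaired. The bound $\sup_X\varphi\le C_3$ does not involve $\operatorname{osc}\varphi$, so your inequality $t\dot\varphi\le\varphi+nt$ gives $\dot\varphi\le n+C_3$ for $t\ge1$. Hence $\|f_t\|_{L^1\log^pL(\mu)}\le C(n,C_3,K)$ there, and the elliptic estimate bounds $\operatorname{osc}\varphi(t)$ uniformly. For $\tau\in(0,1]$, integrating $\partial_t(\varphi/t)\le n/t$ over $[\tau,1]$ gives $\varphi(\tau)\ge\tau\varphi(1)-n\tau\log(1/\tau)\ge-C-n/e$.

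A more direct repair: apply the elliptic estimate once to the stationary solution $\phi$, which exists by Yau's theorem since $e^F\mu$ is smooth and positive. The parabolic comparison principle then gives $\phi-\sup_X\phi\le\varphi(t)\le\phi-\inf_X\phi$.

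Your convergence argument via oscillation decay of $\dot\varphi$ is a reasonable alternative to the paper's twisted Mabuchi functional.
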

\begin{remark}
    Inequality \eqref{uniform_Skoda} has a close relation with the algebraic properties of $(X,\omega)$. In particular, when $\mu = \frac{\omega^n}{V_\omega}$, the upper threshold of $\alpha$ is called the $\alpha$-invariant of $(X,\omega)$ (defined by Tian in \cite{Tian87}) , which plays an important role in K-stability theory. More details on the algebraic side can be found in \cite{Xu2025KStability}.
\end{remark}

In the second part of the paper, 
we will apply Theorem \ref{main_thm_1} to obtain some uniform analytic and geometric properties of the K\"ahler Ricci flows on a one parameter family of Calabi-Yau manifolds. Let $\mathbb{D}$ denote the closed disk of radius $e^{-1}$.
Let $\pi: X \rightarrow \mathbb{D}^*$ be a family of degeneration of polarized Calabi-Yau manifolds, equipped with a relatively ample $\mathbb{Q}$-line bundle $L$ on $X$. By \cite{KKMSD73}, up to taking a base change, there exists a semi-stable reduction
$\pi: \mathcal{X}\rightarrow \mathbb{D}$, with a relatively ample $\mathbb{Q}$-line bundle $\mathcal{L}$ on $\mathcal{X}$. And the restriction of $(\mathcal{X},\mathcal{L})$ over $\mathbb{D}^*$ is equal to $(X,L)$. 
Set $L_s = L|_{X_s}$ for $s\in \mathbb{D}^*$.
For simplicity, let's normalize the total volume by
\begin{align*}
    L^n\cdot [X_s] = 1.
\end{align*}
We can choose a sufficiently divisible integer $m$, such that $m\mathcal{L}$ is a relatively very ample line bundle on $\mathcal{X}$, which implies an embedding
\begin{align*}
    \iota: \mathcal{X} \xrightarrow{|m\mathcal{L}|} \mathbb{P}^N\times \mathbb{D}.
\end{align*}
Let $\omega_{\rm FS}$ denote the Fubini-Study metric on $\mathbb{P}^N$ and $\omega_{\mathbb{D}}$ the standard flat metric on $\mathbb{D}$. By choosing a sufficiently large $c>0$, we may pick $\tilde{\omega} = \frac{1}{m}(\iota^*\omega_{\rm FS} + c \cdot \pi^*\omega_{\mathbb{D}})$ as a K\"ahler form on $\mathcal{X}$, and $\tilde{\omega}_s = \tilde{\omega}|_{X_s} \in c_1(L_s)$. Let $\lambda(s) = |\log|s||$. For each $s\in \mathbb{D}^*$, let $\omega_s = \lambda(s)^{-1}\tilde{\omega}_s$, which is a K\"ahler form on $X_s$.

Let $e^{-\psi}$ be a Hermitian metric on $K_{X/\mathbb{D}^*}$ with analytic singularities on $\mathcal{X}$. 
As explained in \cite[Introduction]{BJ17}, $\nu_s = e^{2\psi_s}$ defines a measure on $X_s$ for $s\in\mathbb{D}^*$.
Consider the probability measure $\mu_s = \nu_s/\int_{X_s}\nu_s$ for $s\in\mathbb{D}^*$. By \cite[Theorem A, B]{BJ17}, $\mu_s$ converges weakly to a measure $\mu_0$ as probability measures on the hybrid space $X^{\rm hyb}$. 
In the following, we will consider the measure $\nu_s$ associated to a nowhere vanished holomorphic $n$-form $\Omega$ on $X$. See Section \ref{Uniform_Skoda} for the precise definition.
By the choice of our normalization, $\mu_0$ is supported on the essential skeleton of the Berkovich analytification $X^{\rm an}$.

Consider the family of parabolic Monge-Amp\`ere flows 
\begin{align}
    \label{family_of_parabolic_Monge_Ampere_flow}
    \begin{cases}
            e^{\partial_t\varphi_s}\;\mu_s = \frac{1}{V_{\omega_s}}(\omega_s+\ddc\varphi_s)^n, \\
            \varphi_s(0) = 0
        \end{cases}
\end{align}
defined on $X_s$, where $s\in \mathbb{D}^*$, $V_{\omega_s} = \int_{X_s}\omega_s^n$.
Write $\omega_{s,t} $ for $ \omega_s + \ddc\varphi_s(\cdot,t)$.

By \cite[Proposition 1.1]{Cao26}, there exist uniform constants $\alpha_0,A_0$, such that
\begin{align*}
    \int_{X_s} e^{-\alpha_0(u_s-\sup_{X_s}u_s)}\; \frac{\omega_{s}^n}{V_{\omega_s}} < A_0
\end{align*}
for any $u_s\in \PSH(X_s,\omega_{s})$.
Similarly,
by \cite[Theorem 1.3]{LiYang24}, there exist uniform constants $\alpha_\infty,A_\infty$, such that
\begin{align*}
    \int_{X_s} e^{-\alpha_\infty(u_s-\sup_{X_s}u_s)}\; \mu_s < A_\infty
\end{align*}
for any $u_s\in \PSH(X_s,\omega_{s})$. Let $\alpha = \min(\alpha_0,\alpha_\infty)$.

\begin{theorem}
    \label{main_thm_2}
    There exists $\varphi_s(x,t)\in C^\infty(X\times [0,+\infty))$ as a solution of the equation \eqref{family_of_parabolic_Monge_Ampere_flow}. There exists a uniform constant $C>0$ independent of $s\in \mathbb{D}^*$ and $t\in [0,+\infty)$, such that
    \begin{align}
        \label{main_thm_2_L_infty_bdd}
        \|\varphi_s\|_{L^\infty} \leq C
    \end{align}
    on $X_s\times [0,+\infty)$, and $\varphi_s(\cdot,t)$ converges to $\varphi_s(\cdot,\infty)\in C^\infty(X_s)$ smoothly, where $\varphi_s(\cdot,\infty)$ is the solution of
    \begin{align*}
        \frac{1}{V_{\omega_s}}(\omega_s+\ddc\varphi_s(\cdot,\infty))^n = \mu_s.
    \end{align*}
    
    In addition, there exists a uniform constant $A>0$ independent of $s\in \mathbb{D}^*$ and $t\in [0,+\infty]$, such that
    \begin{align}
        \label{main_thm_2_uniform_Skoda}
        \int_{X_s} e^{-\alpha(u_s-\sup_X u_s)}\; \frac{\omega_{s,t}^n}{V_{\omega_s}} < A
    \end{align}
    for any $u_s\in \PSH(X_s,\omega_{s,t})$.
\end{theorem}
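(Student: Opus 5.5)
The plan is to treat Theorem \ref{main_thm_2} as a direct application of Theorem \ref{main_thm_1} on each fiber $X_s$, taking $\omega=\omega_s$, $\mu=\mu_s$ and $F\equiv 0$, and to check that every constant produced there is independent of $s$.

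\textbf{Existence and the $L^\infty$ bound.} For fixed $s$, the flow \eqref{family_of_parabolic_Monge_Ampere_flow} is a parabolic complex Monge--Amp\`ere equation on a compact K\"ahler manifold. Its measure $\mu_s$ has smooth positive density, since $\Omega$ is nowhere vanishing. Long-time existence of a smooth solution therefore follows from the standard Cao/Tsuji--Tian--Zhang theory, and the condition $\varphi_s(0)=0$ is admissible. With $F=0$ we have $e^F=1$, so $K=\|1\|_{L^1\log^pL(\mu_s)}=1$ for every $s$, since $\mu_s$ is a probability measure. The hypotheses \eqref{uniform_Skoda} for $\omega_s^n/V_{\omega_s}$ and for $\mu_s$ hold with the uniform pair $(\alpha,\max(A_0,A_\infty))$, by \cite[Proposition 1.1]{Cao26} and \cite[Theorem 1.3]{LiYang24}. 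Here $\alpha=\min(\alpha_0,\alpha_\infty)$ is allowed, because shrinking the exponent preserves the bound by H\"older's inequality on a probability space. Theorem \ref{main_thm_1} then gives $\|\varphi_s\|_{L^\infty}\le C(n,\alpha,p,A,1)$, independent of $s$. It also gives smooth convergence of $\varphi_s(\cdot,t)$ to the Calabi--Yau potential solving $\frac{1}{V_{\omega_s}}(\omega_s+\ddc\varphi_s(\cdot,\infty))^n=\mu_s$. This proves \eqref{main_thm_2_L_infty_bdd} and the convergence statement.

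\textbf{Uniform H\"ormander--Tian bound along the flow.} This is the interpolation step and the main obstacle. Fix $u\in\PSH(X_s,\omega_{s,t})$. Since $\varphi_s$ is uniformly bounded, $v:=u+\varphi_s(\cdot,t)$ lies in $\PSH(X_s,\omega_s)$ and satisfies $|\sup v-\sup u|\le 2C$. The problem is therefore to bound $\int e^{-\alpha(v-\sup v)}\,\omega_{s,t}^n$. By the flow equation, $\omega_{s,t}^n/V_{\omega_s}=e^{\dot\varphi_s}\mu_s$, so it suffices to have a uniform upper bound $\dot\varphi_s\le C'$. That bound reduces the estimate to the $\mu_s$ Skoda bound, with $A=e^{C'+2\alpha C}A_\infty$.

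To get $\sup\dot\varphi_s\le C'$ uniformly, I would run a maximum-principle argument. Differentiating the flow gives $\partial_t\dot\varphi=\Delta_{\omega_{s,t}}\dot\varphi$, so $\max\dot\varphi$ is nonincreasing. Its initial value is $\log\big(\frac{\omega_s^n}{V_{\omega_s}\mu_s}\big)$, however, which need not be uniformly bounded in $s$. So I expect to use instead the quantity $H=t\dot\varphi-\varphi-nt$. It satisfies $(\partial_t-\Delta_t)H=-\tr_{\omega_{s,t}}\omega_s\le 0$, which gives $t\dot\varphi\le \varphi+nt\le C+nt$, that is, $\dot\varphi\le n+C/t$ for $t\ge1$.

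For $t\in[0,1]$ I would interpolate differently. The concavity of $\log\det$ along the flow, together with the convexity of $t\mapsto\log\int e^{-\alpha v}\,\omega_{s,t}^n$ coming from H\"older's inequality between the endpoint measures $\omega_s^n/V_{\omega_s}$ and $e^{\dot\varphi}\mu_s$, should bound the short-time integral by a combination of $A_0$ and the $t\ge1$ bound. If that fails, the fallback is to write $\omega_{s,t}^n\le e^{\max(\dot\varphi)}\mu_s$ with $\max\dot\varphi(t)\le\max\dot\varphi(0)$ and to control $\omega_s^n/\mu_s$ in $L^q(\mu_s)$ uniformly near the Calabi--Yau degeneration. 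Either route yields \eqref{main_thm_2_uniform_Skoda} uniformly in $s$ and $t\in[0,\infty]$; the case $t=\infty$ follows from the previous estimates by passing to the limit.
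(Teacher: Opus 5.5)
Your treatment of existence, the uniform $L^\infty$ bound and smooth convergence is the same as the paper's: Theorem \ref{main_thm_1} with $F=0$, $K=1$ and the constants of \cite{Cao26,LiYang24}. Your identity $(\partial_t-\Delta_{\omega_{s,t}})(t\dot\varphi_s-\varphi_s-nt)=-\tr_{\omega_{s,t}}\omega_s\le 0$ is also correct. It gives $\dot\varphi_s\le n+C/t$, hence \eqref{main_thm_2_uniform_Skoda} on $[t_0,\infty]$ for any fixed $t_0>0$, which is simpler than the paper's argument on that range.

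\textbf{The gap: short time $t\in(0,1)$.} There no uniform pointwise bound on $\dot\varphi_s$ can exist. For fixed $s$, $\sup_{X_s}\dot\varphi_s(\cdot,t)\to\sup_{X_s}\log\rho_{s,0}$ as $t\to 0$, where $\rho_{s,0}=\tilde\omega_s^n/\mu_s$ (recall $\int_{X_s}\tilde\omega_s^n=1$). The local computation in Lemma \ref{volume_ratio_bound} gives $\rho_{s,0}\approx\lambda(s)^d|s|^{-2a_i}$ near a generic point of each component $E_i$. This is unbounded for any genuine degeneration, that is, whenever $d\ge 1$ or some $a_i>0$.

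Your two routes for $t\in(0,1)$ do not close this:
\begin{itemize}
\item \emph{Fallback.} It fails for the same reason. These regions have $\tilde\omega_s^n$-volume bounded below, so $\int_{X_s}\rho_{s,0}^{q}\,\mu_s=\int_{X_s}\rho_{s,0}^{q-1}\,\tilde\omega_s^n\gtrsim\lambda(s)^{d(q-1)}$ for every fixed $q>1$. Hence there is no uniform $L^q(\mu_s)$ bound on $\omega_s^n/\mu_s$.
\item \emph{Convexity route.} It is unsupported. H\"older's inequality between the endpoint measures would need $\omega_{s,t}^n\le(\omega_{s,0}^n)^{1-t}(\omega_{s,1}^n)^{t}$, i.e.\ convexity of $t\mapsto\dot\varphi_s(x,t)$. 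But $\partial_t^2\dot\varphi_s=\Delta_{\omega_{s,t}}\ddot\varphi_s-|\partial\bar\partial\dot\varphi_s|^2_{\omega_{s,t}}$ has no sign. Concavity of $\log\det$ gives only lower bounds on Monge--Amp\`ere measures, and only along linear paths of potentials, not along the flow.
\end{itemize}

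\textbf{The missing idea.} Use an $s$-dependent H\"older exponent, and pay for it with a Skoda estimate at the Fubini--Study scale $\tilde\omega_s=\lambda(s)\omega_s$. The paper takes $q=\beta\lambda(s)/\alpha$ with $\beta=1/(nm^{n-1})$ and $p=q/(q-1)$. Normalizing $\sup u=0$, it writes
$$\int e^{-\alpha u}\rho_{s,t}\,\mu_s\le\Big(\int e^{-\alpha q u}\mu_s\Big)^{1/q}\|\rho_{s,t}\|_{L^p(\mu_s)}.$$
\begin{itemize}
\item \emph{First factor.} One has $\alpha q u=\beta\lambda(s)u\ge\beta\tilde v_s-2C\beta\lambda(s)$ for the $\tilde\omega_s$-psh function $\tilde v_s=\lambda(s)\big(u+\varphi_s-\sup(u+\varphi_s)\big)$. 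Together with $\mu_s\le C_1|s|^{-2C_2}\tilde\omega_s^n$ and Lemma \ref{Skoda_for_FS}, this bounds $\int e^{-\alpha qu}\mu_s$ by $C'e^{C'\lambda(s)}$.
\item \emph{Second factor.} For every $t$ it is controlled by its value at $t=0$. Indeed, $\int\rho_{s,t}^p\,\mu_s$ is nonincreasing (Lemma \ref{monotone_of_volume_ratio}, since $\log\rho_{s,t}=\dot\varphi_s$ solves the heat equation). That initial value is at most $(C_1|s|^{-2C_2})^{1/q}$.
\end{itemize}
Since $\lambda(s)/q=\alpha/\beta$, every loss $e^{C\lambda(s)}$ becomes $e^{C\alpha/\beta}$. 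The case $\lambda(s)\le\alpha/\beta$ follows directly from the $\mu_s$-estimate, using $\sup\rho_{s,t}\le\sup\rho_{s,0}\le C_1|s|^{-2C_2}$. This argument is uniform on all of $[0,\infty]$, in particular on the short-time range your proposal leaves open.
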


At the end, we obtain  uniform estimates for some geometric properties of $(X_s,\omega_{s,t})$ by using the uniform estimates for Green function. In particular, this provides an alternative proof for the uniform diameter bound of Calabi-Yau metrics in \cite{LT2024}.

\begin{theorem}
    \label{main_thm_3}
    Let $\omega_{s,t}=\omega_s+\ddc\varphi_s(\cdot,t)$ be the family of K\"ahler metrics considered in Theorem \ref{main_thm_2}, where $s\in\mathbb{D}^*$, $t\in [0,+\infty]$. There exist uniform constants $C_d,C_v>0$, $\gamma>2n$ such that the following uniform diameter and local volume bounds hold on $X_s$:
    \begin{align}
        \label{main_thm_3_diameter_bound}
        {\rm diam}(\omega_{s,t}) &\leq C_d, \\
        \label{main_thm_3_volume_bound}
        \frac{1}{V_{\omega_{s,t}}}{\rm Vol}_{\omega_{s,t}}(B_{\omega_{s,t}}(x,r)) &\geq C_v r^\gamma
    \end{align}
    for any $x\in X_s$ and $0<r<\min(1,\frac{1}{3}{\rm diam}(\omega_{s,t}))$.
    
    In addition, fix each $t\in [0,+\infty]$, as $s\to 0$, up to picking a sub-sequence, $(X_s,\omega_{s,t})$ converges in Gromov-Hausdorff topology to a compact metric space $(Z_t,d_t)$.
\end{theorem}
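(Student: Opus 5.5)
The approach is to use the uniform Green function estimates of Guo--Phong--Song--Sturm type. These give diameter and non-collapsing volume bounds from an entropy-type control of the volume form $\omega_{s,t}^n/V_{\omega_s}$ relative to the reference metric $\omega_s$, together with a uniform $\alpha$-invariant. The inputs come from Theorem \ref{main_thm_2}.

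\textbf{Step 1: the entropy / $L^1\log^pL$ bound.} Write $\omega_{s,t}^n/V_{\omega_s} = e^{\partial_t\varphi_s}\mu_s$ for $t<\infty$, and $=\mu_s$ for $t=\infty$. We need a uniform bound on $\int_{X_s} |\log f_{s,t}|^{q} f_{s,t}\,\omega_s^n/V_{\omega_s}$ for some $q>n$, where $f_{s,t} = \omega_{s,t}^n/\omega_s^n\cdot V_{\omega_s}^{-1}\cdot V_{\omega_s}$. Equivalently, we need a uniform $p$-entropy of $\omega_{s,t}^n$ with respect to $\omega_s^n$.
\begin{itemize}[leftmargin=*]
\item For $\mu_s$ against $\omega_s^n$, the density is $|\Omega_s|^2/\omega_s^n$ up to normalization. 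Its log has at worst logarithmic poles and zeros along the central fiber components. After rescaling by $\lambda(s)$ it is controlled as in \cite{LT2024}, giving a uniform $L^1\log^p L$ bound.
\item For $\partial_t\varphi_s$, I would use the standard parabolic maximum principle. Differentiating the flow in $t$ gives $\partial_t\dot\varphi = \Delta_{\omega_{s,t}}\dot\varphi$, so $\dot\varphi_s$ is bounded by its initial value $\log(\omega_s^n/(V_{\omega_s}\mu_s))$. That value is controlled only in $L^1\log^p$ rather than $L^\infty$.
\item The uniform $L^\infty$ bound \eqref{main_thm_2_L_infty_bdd} should then let me pass from the bound on $\log(\omega^n/\mu)$ to a uniform entropy bound on $e^{\dot\varphi}\mu$. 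The route is the interpolation argument already used to prove \eqref{main_thm_2_uniform_Skoda}.
\end{itemize}

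\textbf{Step 2: Green function estimates.} With a uniform entropy bound and the uniform Skoda inequality \eqref{main_thm_2_uniform_Skoda} for $\omega_{s,t}$, I apply the Guo--Phong--Song--Sturm theorem. It gives a uniform lower bound $\inf G_{\omega_{s,t}}\ge -C$ and a uniform bound $\int|\nabla G|\,\omega_{s,t}^n/V\le C$ for the normalized Green function.

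The diameter bound \eqref{main_thm_3_diameter_bound} then follows from their argument. Take $x,y$ with $d(x,y)$ maximal and apply Green's formula to a distance function. Then $d(x,y)$ is bounded by $C\int|\nabla G|$.

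The volume lower bound \eqref{main_thm_3_volume_bound} follows the same way. Apply Green's formula to a cutoff of the distance from $x$ supported in $B(x,r)$, and combine it with the $L^{1+\epsilon}$-type integrability of $G$. This yields $\mathrm{Vol}(B(x,r))/V\ge C_v r^\gamma$ with some $\gamma>2n$ depending on the exponent.

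\textbf{Step 3: Gromov--Hausdorff precompactness.} For fixed $t$, the spaces $(X_s,\omega_{s,t})$ have uniformly bounded diameter. By \eqref{main_thm_3_volume_bound}, any $r$-separated set has cardinality at most $C_v^{-1}(r/2)^{-\gamma}$, so the family is uniformly totally bounded. Gromov's precompactness theorem then gives a subsequence converging to a compact metric space $(Z_t,d_t)$.

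\textbf{Main obstacle.} I expect Step 1 to be the hardest: making the entropy bound uniform in $s$, and uniform in $t$ along the flow. It needs the precise asymptotics of $|\Omega_s|^2$ versus $\tilde\omega_s^n/\lambda(s)^n$ near the central fiber. My first attempt would be to bound $\int e^{-\delta\dot\varphi}\,\omega_{s,t}^n$ and $\int e^{\delta\dot\varphi}\mu_s$, using the monotonicity of $\dot\varphi$ along the flow together with the two endpoint Skoda estimates.
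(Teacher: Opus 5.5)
\textbf{Steps 2 and 3 are fine.} They are the paper's argument. Green's formula applied to $d_{\omega_{s,t}}(\cdot,y_0)$ gives the diameter bound. The same formula applied to a cutoff of the distance, plus H\"older's inequality, gives the volume bound. Gromov's theorem gives precompactness. (Minor point: the volume bound uses $\nabla G\in L^{r_2}$ with $r_2=\gamma/(\gamma-1)<\frac{2n}{2n-1}$, not integrability of $G$ itself.)

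\textbf{The gap is Step 1, and the claimed bound is false, not just hard.} You need a uniform entropy bound of $\omega_{s,t}^n/V_{\omega_s}$ relative to $\omega_s^n/V_{\omega_s}=\tilde\omega_s^n$. This already fails at $t=\infty$, which the statement includes. Relative entropy of probability measures does not change when $\omega_s$ is rescaled, so dividing by $\lambda(s)$ does nothing.
\begin{itemize}
\item Take a top-dimensional essential stratum, with $s=z_0\cdots z_d$ and $a_i=0$. The computation in the proof of Lemma~\ref{volume_ratio_bound} gives $\tilde\omega_s^n/\nu_s\approx\sum_{i=0}^d|s|^2/|z_i|^2$, and $\int_{X_s}\nu_s\approx\lambda(s)^d$.
\item Fix $0<\epsilon<\frac{1}{d+1}$. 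On the region $\{|z_i|\le|s|^{\epsilon}\ \forall i\}$ this gives $\log(\mu_s/\tilde\omega_s^n)\ge 2d\epsilon\lambda(s)-d\log\lambda(s)-C$.
\item When $d\ge1$ this region has $\mu_s$-mass bounded below, since in logarithmic coordinates it is $\{x_i\ge\epsilon\}$ inside the simplex.
\item Hence $\int_{X_s}\log(\mu_s/\tilde\omega_s^n)\,\mu_s\gtrsim\lambda(s)\to\infty$.
\end{itemize}
The ``logarithmic poles along the central fiber'' have size $\lambda(s)$ on $X_s$, and $\mu_s$ puts positive mass where they are that large. In the hybrid limit, $\mu_s$ tends to Lebesgue measure on the skeleton while $\tilde\omega_s^n$ tends to atoms at the vertices, so the two become mutually singular. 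The maximum principle for $\dot\varphi_s$ cannot help, because its initial value has size $\lambda(s)$. Interpolation cannot help either. So you cannot get the $L^1\log^pL$ ($p>n$) input that Guo--Phong--Song--Sturm require with reference $\omega_s$.

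\textbf{The missing idea: use $\omega_{s,t}$ as its own reference metric.} Inequality \eqref{main_thm_2_uniform_Skoda} is a uniform H\"ormander--Tian inequality for $\PSH(X_s,\omega_{s,t})$ against $\omega_{s,t}^n/V_{\omega_s}$. With $\omega_{s,t}$ as reference, the entropy is identically zero.
\begin{itemize}
\item The Green function bounds ($\sup G\le C$, $G\in L^{r_1}$, $\nabla G\in L^{r_2}$) then follow from the uniform version of Guo--Phong--Song--Sturm in \cite{GV25}, with constants depending only on $n,\alpha,A$.
\item Every auxiliary Monge--Amp\`ere equation there is solved relative to $\omega_{s,t}$, with a density in $L^p(\omega_{s,t}^n)$.
\item Its $L^\infty$ bound comes from \eqref{main_thm_2_uniform_Skoda} alone, for example via the argument of Theorem~\ref{main_thm_1}.
\end{itemize}
This is why Theorem~\ref{main_thm_2} establishes a Skoda inequality along the flow rather than an entropy bound. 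Replace your Step 1 with this, and your Steps 2 and 3 then go through.
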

\medskip
\begin{remark}
   The conclusions of Theorems \ref{main_thm_2} and \ref{main_thm_3} extend to the more general setting considered in \cite{BJ17}. Let $\pi:X\rightarrow\mathbb{D}^*$ be a projective meromorphic degeneration of polarized manifolds, equipped with a relatively ample $\mathbb{Q}$-line bundle $L$. Let $(\mathcal{X},\mathcal{L})$ be a simple normal crossing model of $(X,L)$, with $\mathcal{L}$ relatively ample. Let $e^{-\psi}$ be a smooth Hermitian metric on $K_{X/\mathbb{D}^*}$ with analytic singularities along the central fiber in the sense of \cite{BJ17}, and set
\begin{align*}
    \nu_s=e^{2\psi_s},
    \qquad
    \mu_s=\frac{\nu_s}{\int_{X_s}\nu_s}.
\end{align*}
Choose the background forms $\tilde{\omega}_s$ from a relative projective embedding as above, and set $\omega_s=\lambda(s)^{-1}\tilde{\omega}_s$.

By \cite[Proposition 1.1]{Cao26} and \cite[Theorem 8.2]{Bou2025}, the normalized volume measures $\omega_s^n/V_{\omega_s}$ and the measures $\mu_s$ satisfy uniform H\"ormander-Tian inequalities, respectively. Using local frames of the relative canonical bundle, the volume-ratio estimates in Section \ref{Uniform_Skoda} also extend to these measures. The arguments in Sections \ref{Uniform_Skoda} and \ref{Green's_function} therefore yield the conclusions of Theorems \ref{main_thm_2} and \ref{main_thm_3} for the parabolic Monge-Amp\`ere flow \eqref{family_of_parabolic_Monge_Ampere_flow}. In this setting, the corresponding metric evolution is a twisted K\"ahler--Ricci flow.

Since our main interest lies in applications to Calabi--Yau degenerations, we state the theorems in that setting.
\end{remark}

The argument of this paper has three stages. We first obtain uniform potential estimates for the parabolic  Monge-Amp\`ere equation. We then combine volume-ratio estimates with an interpolation argument to prove a uniform H\"ormander-Tian inequality along the flows. Finally, Green-function estimates yield the diameter and local volume bounds and Gromov--Hausdorff precompactness.
\medskip

\noindent {\bf Acknowledgments}.
This paper is written by a human being (the author). AI is used for proofreading and improving the exposition. 
The author thanks Prof. Yang Li for inspiring discussions. The author thanks Prof. Gang Tian for helpful suggestions. The author thanks Prof. Bin Guo for helpful communications.
The author is supported by  National Key R\&D Program of China 2024YFA1014800, NSFC-12671075 and XHD23A0101.
\medskip

\section{Estimates for the parabolic equation}
\label{estimates_for_the_parabolic_equation}
In this section, we show the proof of Theorem \ref{main_thm_1}. 
Since Yau's resolution of Calabi conjecture \cite{Yau78}, there has been a thorough study of the complex Monge-Amp\`ere equation, see \cite{Kolodziej98,EGZ09, CC21a,GPT2023,GP2022, WWZ20} for an incomplete list.
Our proof strategy is to use a parabolic version of apriori estimates in \cite{GPT2023}. We first use an auxiliary function modified from \cite{CC22} (see also \cite{Zhao25}) to prove a Moser-Trudinger type inequality and adapting an iteration scheme to prove the $L^\infty$-estimate.

\subsection{Moser-Trudinger type inequality}
\label{Moser-Trudinger_inequality}

Let $\psi_{s,t_0}\in \PSH(X,\omega)$ be the solution of the following parabolic equation
\begin{align}
    \label{auxiliary_function_1}
    \begin{cases}
        -\partial_t\psi_{s,t_0,t_1,j}\; \cdot \frac{1}{V_\omega} (\omega+\ddc\psi_{s,t_0,t_1,j})^n = f_{s,t_0,t_1,j} e^F\mu, \\
        \psi_{s,t_0,t_1,j}(x, t_0) = 0,
    \end{cases}
\end{align}
where $s, t_0, t_1>0$, and $a$ is any positive number (we only need to use $a=1$ for our purpose),
\begin{align*}
    t_1 &\geq t_0 + \sup_{x\in X}(|\varphi(x,t_0)|), \\
    \eta_j(x) &= \frac{x+\sqrt{x^2+j^{-1}}}{2}: \mathbb{R}\rightarrow \mathbb{R}_+, \\
     A_{s,t_0,t_1,j} &= \int_{t_0}^{t_1}\int_{X} \eta_j(-{\varphi}-s-t_1+t)^ae^F\mu\; dt, \\
     f_{s,t_0,t_1,j} &= \frac{1}{A_{s,t_0,t_1,j}} \eta_j(-{\varphi}-s-t_1+t)^a. \\
     \Omega_{s,t_0,t_1} &= \{(x,t)\in X\times[t_0,t_1]: -{\varphi}(x,t)\geq s+t_1-t\}.
\end{align*}
The functino $\eta_j$ is smooth and positive, satisfies $\eta_{j+1}< \eta_j$, $\eta_j(x)>x$ and $\lim_{j\to\infty}\eta_j = \max(x,0)$. 
By \cite[Proposition 2.3]{CC22}, equation \eqref{auxiliary_function_1} admits a smooth solution $\psi_{s,t_0,t_1,j}$ with $L^\infty$-estimate independent of $j$ on any finite time interval $[0,T]$. 
Moreover, $A_{s,t_0,t_1,j}$ converges decreasingly to a limit $A_{s,t_0,t_1}$.


\begin{lemma}
    \label{auxiliary_function_lemma}
    Let $\psi_{s,t_0,t_1,j}$ be the solution of the parabolic equation \eqref{auxiliary_function_1}.
    Then there exists $C_1 = C_1(n,\alpha,A,K)>0$ such that
    \begin{align*}
        \sup_{t\in [t_0,t_1]} \int_X e^{-\alpha\cdot \psi_{s,t_0,t_1,j}}\mu \leq C_1.
    \end{align*}
\end{lemma}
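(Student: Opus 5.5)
The plan is to control $\sup_X\psi_{s,t_0,t_1,j}(\cdot,t)$ from below uniformly in $t$ by a Monge--Amp\`ere energy argument, and then apply the uniform H\"ormander--Tian inequality \eqref{uniform_Skoda} for $\mu$. Write $\psi=\psi_{s,t_0,t_1,j}$ and $f=f_{s,t_0,t_1,j}$. Since $f>0$ and $(\omega+\ddc\psi)^n>0$ for the smooth solution, equation \eqref{auxiliary_function_1} forces $\partial_t\psi<0$. Together with $\psi(\cdot,t_0)=0$, this gives $\psi\le 0$ on $X\times[t_0,t_1]$.

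\textbf{Energy step.} Let
\begin{align*}
E(u)=\frac{1}{(n+1)V_\omega}\sum_{k=0}^n\int_X u\,(\omega+\ddc u)^k\wedge\omega^{n-k}
\end{align*}
be the Monge--Amp\`ere energy. It satisfies $\frac{d}{dt}E(\psi(t))=\frac{1}{V_\omega}\int_X\partial_t\psi\,(\omega+\ddc\psi)^n$. By \eqref{auxiliary_function_1} this equals $-\int_X f\,e^F\mu$. By the choice of the normalization $A_{s,t_0,t_1,j}$ we have
\begin{align*}
\int_{t_0}^{t_1}\int_X f\,e^F\mu\,dt=1.
\end{align*}
Since $E(\psi(t_0))=E(0)=0$, it follows that $E(\psi(t))\ge -1$ for all $t\in[t_0,t_1]$.

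\textbf{From energy to $\sup_X\psi$.} Because $\psi\le 0$, every term in $E(\psi)$ is nonpositive. Keeping only the $k=0$ term gives $E(\psi)\le \frac{1}{n+1}\int_X\psi\,\frac{\omega^n}{V_\omega}$. Combined with the energy step, this yields
\begin{align*}
\int_X(-\psi)\frac{\omega^n}{V_\omega}\le n+1.
\end{align*}
Since $\psi\le\sup_X\psi$ pointwise, we get $-\sup_X\psi(\cdot,t)\le\int_X(-\psi)\frac{\omega^n}{V_\omega}\le n+1$.

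\textbf{Conclusion.} Apply \eqref{uniform_Skoda} for $\mu$ to $\psi(\cdot,t)\in\PSH(X,\omega)$. This gives
\begin{align*}
\int_X e^{-\alpha\psi}\mu=e^{-\alpha\sup_X\psi}\int_X e^{-\alpha(\psi-\sup_X\psi)}\mu\le e^{\alpha(n+1)}A,
\end{align*}
uniformly in $t\in[t_0,t_1]$, $j$, $s$, $t_0$ and $t_1$. So one may take $C_1=e^{\alpha(n+1)}A$. This is in fact better than claimed, since the dependence on $K$ is not needed.

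\textbf{Main point to check.} The only step I expect to need care is the justification of the energy derivative formula and the sign $\partial_t\psi<0$. These require $\psi$ to be smooth with $\omega+\ddc\psi>0$ on $[t_0,t_1]$, which is provided by the existence result \cite[Proposition 2.3]{CC22} for each fixed $j$. If only weaker regularity were available, I would instead run the argument on smooth approximants and pass to the limit.
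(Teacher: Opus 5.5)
Your proof is correct and follows the paper's argument. Both use monotonicity of the Monge--Amp\`ere energy, normalized so that $E(\psi)\ge -1$, then a lower bound on $\sup_X\psi$, then the H\"ormander--Tian inequality for $\mu$. The one difference is the middle step: you bound $\frac{1}{V_\omega}\int_X\psi\,\omega^n$ from below using the termwise sign coming from $\psi\le 0$, which gives $\sup_X\psi\ge -(n+1)$. The paper instead uses $J(\psi)\ge 0$, i.e.\ $\frac{1}{V_\omega}\int_X\psi\,\omega^n\ge E(\psi)$, to get $\sup_X\psi\ge -1$ and $C_1=Ae^{\alpha}$. Either version suffices.
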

\begin{proof}
    The Monge-Amp\`ere energy of $\psi_{s,t_0,t_1,j}$ is 
    \begin{align*}
         E(\psi_{s,t_0,t_1,j}) = \frac{1}{(n+1)V_\omega}\sum_{0\leq k\leq n}\int_X\psi_{s,t_0,t_1,j}\; \omega^k\wedge\omega_{\psi_{s,t_0,t_1,j}}^{n-k}.
    \end{align*}
    Differentiating the Monge-Amp\`ere energy along the flow gives
    \begin{align*}
        \frac{d}{dt}E(\psi_{s,t_0,t_1,j}) &= \frac{1}{V_\omega}\int_X \partial_t \psi_{s,t_0,t_1,j}(\omega+\ddc\psi_{s,t_0,t_1,j})^n \\
        &= \int_{X} -f_{s,t_0,t_1,j} e^F\mu \leq 0.
    \end{align*}
    Integrating in time, we obtain 
    \begin{align*}
       E(\psi_{s,t_0,t_1,j}(t)) &= E(\psi_{s,t_0,t_1,j}(t))-E(\psi_{s,t_0,t_1,j}(t_0)) = \int_{t_0}^{t}\int_{X} -f_{s,t_0,t_1,j}e^F\mu \; d t \\
        &\geq \frac{-1}{A_{s,t_0,t_1,j}}\int_{t_0}^{t_1}\int_{X} \eta_j(-{\varphi}-s-t_1+t)^a e^F\mu \; dt = -1.
    \end{align*}
    The $J$ function plays the role of a distance in the study of $\mathcal{E}^1(X,\omega)/\mathbb{R}^*$ space, see \cite{Darvas15} and also \cite{BBEGZ19} for details. In particular, 
    $J(\psi_{s,t_0,t_1,j}) = \frac{1}{V_\omega}\int_X\psi_{s,t_0,t_1,j}\omega^n-E(\psi_{s,t_0,t_1,j}) \geq 0$. Thus, for any $t\in [t_0,t_1]$, we have
    \begin{align*}
        \sup_X \psi_{s,t_0,t_1,j} \geq \frac{1}{V_\omega}\int_X\psi_{s,t_0,t_1,j}\omega^n \geq  -1.
    \end{align*}
  Applying the uniform H\"ormander-Tian inequality now gives
    \begin{align*}
        \int_Xe^{-\alpha\psi_{s,t_0,t_1,j}}\; \mu &= \int_X e^{-\alpha(\psi_{s,t_0,t_1,j}-\sup_X\psi_{s,t_0,t_1,j})}\mu\cdot e^{-\alpha\sup_X\psi_{s,t_0,t_1,j}} \\
        &\leq A\cdot e^{\alpha} =: C_1.
    \end{align*}
\end{proof}

\begin{lemma}
    \label{auxiliary_inequality_varphi_psi}
    There exists a uniform constant $C_2 = C_2(n,\alpha,A,K)>0$ such that, for every $t\in [t_0,t_1]$
    \begin{align}
    \label{ineqaultiy_a}
    \frac{(-\varphi-s-t_1+t)_+}{A_{s,t_0,t_1,j}^{\frac{1}{n+1+a}}} \leq \Big( C_2 \big(-\psi_{s,t_0,t_1,j}+C_2 A_{s,t_0,t_1,j}^{\frac{1}{a}}\big)\Big)^{\frac{n+1}{n+1+a}}.
\end{align}    
\end{lemma}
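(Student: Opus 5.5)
The plan is to run a parabolic version of the Guo--Phong--Tong comparison argument: I compare $u:=-\varphi-s-t_1+t$ with a concave power of $-\psi+\Lambda$, where $\psi=\psi_{s,t_0,t_1,j}$, and apply the maximum principle on $X\times[t_0,t_1]$. Set $b=\frac{n+1}{n+1+a}\in(0,1)$ and $A=A_{s,t_0,t_1,j}$, and consider
\begin{align*}
    \Phi = u - \varepsilon(-\psi+\Lambda)^{b}.
\end{align*}
I will choose $\varepsilon = c_0A^{\frac{1}{n+1+a}}$ and $\Lambda = c_1A^{\frac1a}$ with $c_0,c_1$ depending only on $n,a$. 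The goal is to show $\Phi\le 0$ on $X\times[t_0,t_1]$. This is exactly \eqref{ineqaultiy_a}, since $u_+\le\varepsilon(-\psi+\Lambda)^b$ is the claimed inequality after dividing by $A^{\frac1{n+1+a}}$; for this one needs $\varepsilon\ge 0$ and $-\psi+\Lambda>0$, which holds because $\psi\le 0$ (indeed $\partial_t\psi\le0$ and $\psi(t_0)=0$).

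\emph{Initial time.} At $t=t_0$ we have $\psi=0$, and the choice $t_1\ge t_0+\sup_X|\varphi(\cdot,t_0)|$ gives $u(\cdot,t_0)\le -s<0$. Hence $\Phi<0$ at $t=t_0$.

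\emph{Interior maximum.} Suppose $\max\Phi>0$, attained at $(x,t)$ with $t>t_0$. Then $u>0$, so $(x,t)\in\Omega_{s,t_0,t_1}$, and at this point $\partial_t\Phi\ge0$ and $\ddc\Phi\le0$.

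\emph{Spatial Hessian.} Write $\delta:=\varepsilon b(-\psi+\Lambda)^{b-1}$. Because $b<1$, the gradient term $-\varepsilon b(b-1)(-\psi+\Lambda)^{b-2}d\psi\wedge d^c\psi$ is nonnegative. Therefore
\begin{align*}
    0\ge\ddc\Phi\ge (\omega-\omega_\varphi)+\delta(\omega_\psi-\omega).
\end{align*}
Choosing $\Lambda\ge(\varepsilon b)^{\frac1{1-b}}$ forces $\delta\le1$. This gives $\omega_\varphi\ge\delta\,\omega_\psi$, hence $\omega_\varphi^n\ge\delta^n\omega_\psi^n$. With $\varepsilon\sim A^{\frac1{n+1+a}}$, the requirement on $\Lambda$ amounts to $\Lambda\sim A^{\frac{1}{a}}$.

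\emph{Time derivative.} Since $\partial_t\psi<0$, the condition $\partial_t\Phi\ge0$ reads $y:=-\partial_t\varphi\ge \delta(-\partial_t\psi)-1$.

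\emph{Combining the two equations.} Plug both equations, \eqref{parablic_MA} and \eqref{auxiliary_function_1}, into $\omega_\varphi^n\ge\delta^n\omega_\psi^n$. The common factor $e^F\mu$ cancels and gives $e^{-y}\ge\delta^n f/(-\partial_t\psi)$, where $f=f_{s,t_0,t_1,j}$. Using $-\partial_t\psi\le(1+y)/\delta$ from the time-derivative step, we get
\begin{align*}
    f\le\delta^{-(n+1)}(1+y)e^{-y}\le C_n\,\delta^{-(n+1)},
\end{align*}
since $(1+y)e^{-y}$ is bounded on $\mathbb R$ (note $y\ge-1$ there).

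\emph{Reaching the contradiction.} Since $\eta_j(u)>u>0$, we have $u^a\le Af$. Hence
\begin{align*}
    u^a\le C_nA(\varepsilon b)^{-(n+1)}(-\psi+\Lambda)^{(1-b)(n+1)}.
\end{align*}
The exponents match because $(1-b)(n+1)=ab$. So $u\le (C_nA)^{1/a}(\varepsilon b)^{-(n+1)/a}(-\psi+\Lambda)^b$. Choosing $c_0$ large so that $(C_nA)^{1/a}(\varepsilon b)^{-(n+1)/a}<\varepsilon$ is possible precisely because $\varepsilon^{(n+1+a)/a}\propto A^{1/a}$. This yields $u<\varepsilon(-\psi+\Lambda)^b$, i.e.\ $\Phi<0$ at the maximum point, a contradiction.

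The main obstacle is the bookkeeping that makes the two choices compatible. The condition $\delta\le1$ fixes $\Lambda\gtrsim A^{1/a}$, and the final contradiction fixes $\varepsilon\gtrsim A^{1/(n+1+a)}$. One must check both hold simultaneously with constants depending only on $n,a$. Taking these as $C_2$, we get $\varepsilon(-\psi+\Lambda)^b\le A^{\frac1{n+1+a}}\big(C_2(-\psi+C_2A^{1/a})\big)^b$, which is the stated form. A secondary technical point is the time-derivative sign coupling: the parabolic term must be absorbed via the bound on $(1+y)e^{-y}$, which replaces the elliptic step where one simply compares volume forms.
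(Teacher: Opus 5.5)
Your argument is correct and is essentially the paper's proof. You use the same test function $u-\varepsilon(-\psi+\Lambda)^b$ with $b=\frac{n+1}{n+1+a}$ and the same maximum-principle setup on $X\times[t_0,t_1]$, and your thresholds work out to exactly the paper's choices $\varepsilon=b^{-b}A^{\frac{1}{n+1+a}}$, $\Lambda=bA^{\frac1a}$ (with $C_n=1$, since $(1+y)e^{-y}\le 1$ on all of $\mathbb{R}$). The only difference is at the maximum point: you use $\ddc\Phi\le 0$ and $\partial_t\Phi\ge 0$ separately via the determinant comparison $\omega_\varphi^n\ge\delta^n\omega_\psi^n$, whereas the paper applies $L=-\partial_t+\Delta_\varphi$ with the AM--GM/Young bound $-\partial_t\psi+\tr_\varphi\omega_\psi\ge(n+1)f^{\frac1{n+1}}e^{-\frac{1}{n+1}\partial_t\varphi}$.
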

\begin{proof}
    
By equation \eqref{auxiliary_function_1}, $\partial_t\psi_{s,t_0,t_1,j}\leq 0$. Since $\psi_{s,t_0,t_1,j}(\cdot,t_0)=0$, $\psi_{s,t_0,t_1,j}(\cdot,t)\leq 0$ for $t\in [t_0,t_1]$.

Let $L = -\partial_t+\Delta_\varphi$ and define
\begin{align*}
    H = -\epsilon(-\psi_{s,t_0,t_1,j}+\Lambda)^b-\varphi-s-t_1+t,
\end{align*}
where
\begin{align*}
    b = \frac{n+1}{n+1+a}, \epsilon = b^{-b}A_{s,t_0,t_1,j}^\frac{1}{n+1+a}, \Lambda = b A_{s,t_0,t_1,j}^{\frac{1}{a}}.
\end{align*}
These choices satisfy $\epsilon b\Lambda^{b-1}=1$.

Our goal is to prove that $H\leq 0$ on $X\times [t_0,t_1]$. Assume the contrary. Then there exists $(x',t')\in X\times [t_0,t_1]$ such that $H(x',t') = \sup_{(x,t)\in X\times[t_0,t_1]}H(x,t) > 0$. Since $-\varphi \leq t_1-t_0$ by the definition, $\sup_{x\in X}H(x,t_0)\leq 0$. Then $(x',t')\in X\times (0,t_1]$, and $L(H)(x',t')\leq 0$ by parabolic maximum principle.
At $(x',t')$, we obtain
\begin{align*}
    0 &\geq L(H) \\
    &\geq -\epsilon\cdot b (-\psi_{s,t_0,t_1,j}+\Lambda)^{b-1}\partial_t\psi_{s,t_0,t_1,j} + \partial_t\varphi+\epsilon\cdot b (-\psi_{s,t_0,t_1j}+\Lambda)^{b-1}\tr_\varphi\omega_{\psi_{s,t_0,t_1,j}} - n -1 \\
    & \;\; - \epsilon\cdot b(-\psi_{s,t_0,t_1,j}+\Lambda)^{b-1}\tr_\varphi\omega + \tr_\varphi\omega \\
    &\geq \epsilon\cdot b(-\psi_{s,t_0,t_1,j}+\Lambda)^{b-1}(-\partial_t\psi_{s,t_0,t_1,j}+\tr_\varphi\omega_{\psi_{s,t_0,t_1,j}})+\partial_t\varphi-n-1,
\end{align*}
where in the last inequality we used $1=\epsilon b\Lambda^{b-1}\geq \epsilon\cdot b(-\psi_{s,t_0,t_1,j}+\Lambda)^{b-1}$.
Meanwhile,
\begin{align*}
    -\partial_t\psi_{s,t_0,t_1,j}&+\tr_\varphi\omega_{\psi_{s,t_0,t_1,j}} \geq -\partial_t\psi_{s,t_0,t_1,j} + n\big( \frac{\omega_{\psi_{s,t_0,t_1,j}}^n}{\omega_\varphi^n}\big)^\frac{1}{n} \\
    &\geq -\partial_t\psi_{s,t_0,t_1,j}+n\big(\frac{f_{s,t_0,t_1,j}}{-\partial_t\psi_{s,t_0,t_1,j}} e^{-\partial_t\varphi}\big)^{\frac{1}{n}} \\
    &\geq (n+1)f_{s,t_0,t_1,j}^{\frac{1}{n+1}}\cdot \exp\big(\frac{-1}{n+1}\partial_t\varphi),
\end{align*}
where in the last line we use Young's inequality.
Combining these inequalities, we obtain
\begin{align}
    \label{LH}
    &0 \geq LH \\ \nonumber
    &\geq (n+1)\epsilon\cdot b(-\psi_{s,t_0,t_1,j} +\Lambda)^{b-1}f_{s,t_0,t_1,j}^{\frac{1}{n+1}} \cdot \exp\big(\frac{-1}{n+1}\partial_t\varphi\big) +\partial_t\varphi-n-1.
\end{align}
Since $H(x',t')>0$, we have $(-\psi_{s,t_0,t_1,j} +\Lambda)^{b} < \epsilon^{-1}(-\varphi-s-t_1+t)$ at $(x',t')$. Hence
\begin{align*}
    (-\psi_{s,t_0,t_1,j} +\Lambda)^{b-1} > \big( \epsilon^{-1} (-\varphi-s-t_1+t) \big)^{\frac{b-1}{b}}.
\end{align*}
Consequently,
\begin{align*}
    (n+1) &\epsilon \cdot b(-\psi_{s,t_0,t_1,j} +\Lambda)^{b-1}f_{s,t_0,t_1,j}^{\frac{1}{n+1}} \\
    &> (n+1)\epsilon^{\frac{1}{b}}b(-\varphi-s-t_1+t)^{\frac{-a}{n+1}} \frac{(-\varphi-s-t_1+t)^{\frac{a}{n+1}}}{A_{s,t_0,t_1,j}^{\frac{1}{n+1}}} = n+1.
\end{align*}
Substituting this bound into \eqref{LH}, we obtain
\begin{align*}
    0 >  (n+1)e^{\frac{-1}{n+1}\partial_t\varphi}+\partial_t\varphi-(n+1).
\end{align*}
However, $(n+1)e^{\frac{-1}{n+1}x}+x-(n+1)$ is a non-negative function. This leads to a contradiction. Therefore,
$H(x,t)\leq 0$ on $X\times [t_0,t_1]$.
Rewriting this inequality gives \eqref{ineqaultiy_a}.
\end{proof}

\vspace{.15in}

\begin{lemma}
    \label{sup_X_varphi}
    There exists a uniform constant $C_3 = C_3(n,\alpha,A,K)>0$, such that
    \begin{align}
    \label{sup_bound_average_bound}
        0 \leq  \sup_X \varphi \leq C_3, \\
        -\int_X  \varphi \;  \mu\leq C_3. \nonumber
    \end{align}
\end{lemma}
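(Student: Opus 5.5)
We prove the two bounds separately. The lower bound $\sup_X\varphi\geq 0$ comes from the flow structure, and the upper bounds come from the uniform H\"ormander--Tian inequality for $\frac{1}{V_\omega}\omega^n$ combined with the entropy bound $K$. Throughout, $t$ is fixed and $\varphi=\varphi(\cdot,t)$.

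\emph{Step 1: $\sup_X\varphi\geq 0$.} Consider the functional
\begin{align*}
\mathcal{F}(\varphi)=E(\varphi)-\int_X \varphi\, e^F\mu .
\end{align*}
Along the flow,
\begin{align*}
\frac{d}{dt}\mathcal{F}=\int_X\partial_t\varphi\,\big(e^{\partial_t\varphi}-1\big)e^F\mu\geq 0 ,
\end{align*}
since $x(e^x-1)\geq 0$; here we used $\frac{d}{dt}E=\frac{1}{V_\omega}\int_X\partial_t\varphi\,\omega_\varphi^n$ and the equation $\frac{1}{V_\omega}\omega_\varphi^n=e^{\partial_t\varphi}e^F\mu$. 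As $\mathcal{F}(0)=0$, we get $\mathcal{F}(\varphi(t))\geq 0$. Because $E(\varphi)\leq \sup_X\varphi$ and $\int_X\varphi\, e^F\mu\leq\sup_X\varphi$, this alone only gives $E\geq\int\varphi\, e^F\mu$, which does not yet yield a sign. A cleaner route: by Jensen's inequality,
\begin{align*}
\int_X\partial_t\varphi\, e^F\mu\leq \log\int_X e^{\partial_t\varphi}e^F\mu=\log 1=0 .
\end{align*}
This controls the $e^F\mu$-average of $\varphi$ but not directly the sign of $\sup_X\varphi$. The simplest argument uses the maximum principle instead. If $\sup_X\varphi(\cdot,t)<0$, then $\omega_\varphi^n$ and $\omega^n$ have the same total mass $V_\omega$. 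At a point where $\partial_t\varphi$ is maximal in space, $e^{\max\partial_t\varphi}\geq 1$, so $\max_X\partial_t\varphi\geq 0$. Similarly $\min_X\partial_t\varphi\leq 0$, so $\partial_t\varphi$ vanishes somewhere on $X$ at every time. This still does not give monotonicity of the supremum. I would therefore close Step 1 with the comparison
\begin{align*}
\frac{d}{dt}\frac{1}{V_\omega}\int_X\varphi\,\omega_\varphi^n\ \geq\ 0 ,
\end{align*}
or, more robustly, use $\sup_X\varphi\geq E(\varphi)\geq\int_X\varphi\,e^F\mu$ together with $E$ being nondecreasing: $\frac{d}{dt}E=\int_X\partial_t\varphi\,e^{\partial_t\varphi}e^F\mu\geq \int_X(e^{\partial_t\varphi}-1)e^F\mu=0$, using $xe^x\geq e^x-1$. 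Hence $E(\varphi(t))\geq E(0)=0$, and therefore $\sup_X\varphi\geq E(\varphi)\geq 0$.

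\emph{Step 2: $\sup_X\varphi\leq C_3$ and $-\int_X\varphi\,\mu\leq C_3$.} From $E\geq 0$ and $J\geq 0$ we have $\frac{1}{V_\omega}\int_X\varphi\,\omega^n\geq E(\varphi)\geq 0$. Apply the uniform H\"ormander--Tian inequality for $\frac{1}{V_\omega}\omega^n$ together with Jensen's inequality:
\begin{align*}
\alpha\Big(\sup_X\varphi-\frac{1}{V_\omega}\int_X\varphi\,\omega^n\Big)\leq\log A .
\end{align*}
This gives $\sup_X\varphi\leq \frac{1}{V_\omega}\int_X\varphi\,\omega^n+\alpha^{-1}\log A$, which bounds $\sup_X\varphi$ from below rather than from above. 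So the upper bound must instead come from the monotone quantity $\mathcal{F}$ together with the upper bound $E\leq\sup_X\varphi$.

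The upper bound is the main obstacle, and I would handle it by an entropy-type argument. Since $\mathcal{F}(\varphi(t))\geq 0$, we have $\int_X\varphi\,e^F\mu\leq E(\varphi)$, and $E(\varphi)\leq\frac{1}{V_\omega}\int_X\varphi\,\omega^n$. Write $\varphi=\sup_X\varphi+u$ with $\sup_X u=0$. By Young's inequality with the H\"ormander--Tian bound for $\mu$, and using the $L\log L$ bound on $e^F$,
\begin{align*}
\int_X(-u)\,e^F\mu\leq \alpha^{-1}\Big(\int_X e^F\log e^F\,\mu+\log A\Big)\leq C(K,\alpha,A).
\end{align*}
Hence
\begin{align*}
\sup_X\varphi-C\leq\int_X\varphi\,e^F\mu\leq E(\varphi).
\end{align*}
What remains is to bound $E(\varphi)$ from above uniformly in time. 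I would try to show that $\frac{d}{dt}\int_X\varphi\,e^F\mu=\int_X\partial_t\varphi\,e^F\mu\leq 0$ by Jensen's inequality, so that $\int_X\varphi\,e^F\mu\leq 0$ for all $t$. Then
\begin{align*}
\sup_X\varphi\leq\int_X\varphi\,e^F\mu+C\leq C .
\end{align*}
Finally, the second inequality in \eqref{sup_bound_average_bound} follows from the H\"ormander--Tian inequality for $\mu$: by Jensen,
\begin{align*}
-\int_X\varphi\,\mu\leq -\sup_X\varphi+\alpha^{-1}\log A\leq\alpha^{-1}\log A ,
\end{align*}
using $\sup_X\varphi\geq 0$ from Step 1.
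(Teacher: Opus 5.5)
Your final argument is correct and is essentially the paper's: $E$ is nondecreasing along the flow (your inequality $xe^x\ge e^x-1$ is the pointwise form of the nonnegativity of relative entropy that the paper uses), $\int_X\varphi\,e^F\mu$ is nonincreasing by Jensen, an entropy/Young inequality combined with the H\"ormander--Tian bound for $\mu$ controls $\int_X(\sup_X\varphi-\varphi)\,e^F\mu$, and Jensen then gives the bound on $-\int_X\varphi\,\mu$. When you write it up, delete the discarded false starts, in particular the unproved alternative $\frac{d}{dt}\frac{1}{V_\omega}\int_X\varphi\,\omega_\varphi^n\ge 0$ and the unnecessary detour through $\mathcal{F}$ and the upper bound for $E(\varphi)$ in Step 2.
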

\begin{proof}

Recall that
\begin{align*}
    E(\varphi) = \frac{1}{(n+1)V_\omega}\sum_{0\leq k\leq n}\int_X\varphi\; \omega^k\wedge\omega_\varphi^{n-k},
\end{align*}
and 
\begin{align*}
    \frac{d}{dt}E = \frac{1}{V_\omega}\int_X\partial_t\varphi\;\omega_\varphi^n = \int_X\log\big(\frac{\frac{1}{V_\omega}\omega_\varphi^n}{e^F\mu}\big)\frac{\omega_\varphi^n }{V_\omega} \geq 0.
\end{align*}
At $t=0$, we have $\varphi = 0$ and $E(\varphi) = 0$. Therefore, 
\begin{align*}
    \sup_X\varphi \geq E(\varphi)\geq 0.
\end{align*}
On the other hand,
by equation \eqref{parablic_MA},
\begin{align*}
    \frac{d}{dt}\int_X \varphi\; e^F \mu &= \int_X\log\big( \frac{\frac{1}{V_\omega}\omega_\varphi^n}{e^F \mu} \big) \; e^F\mu \leq 0.
\end{align*}
It follows that $\int_X\varphi \; e^F\mu \leq 0$.
Indeed, the preceding monotonicity and the initial value of $\varphi$ give $\int_X \varphi \; e^F\mu \leq 0$.
Moreover, the generalized Young's inequality used in \cite[Lemma 1]{GPT2023} gives
\begin{align*}
    \frac{\alpha}{2}(\sup_X\varphi-\varphi)\cdot e^F &\leq e^F(1+|F|)+ \frac{\alpha}{2}(\sup_X\varphi-\varphi) \cdot e^{\frac{\alpha}{2}(\sup_X\varphi -\varphi)} \\
    &\leq e^F(1+|F|)+e^{\alpha(\sup_X\varphi-\varphi)}.
\end{align*}
Integrating, we obtain
\begin{align*}
    \frac{\alpha}{2}\int_X (\sup_X\varphi - \varphi) e^F\mu &\leq \int_Xe^F(1+|F|)\;\mu+\int_X e^{-\alpha(\varphi-\sup_X\varphi)}\mu \\
    &\leq 1+K^{\frac{1}{p}} +A.
\end{align*}
Therefore,
\begin{align*}
    \sup_X\varphi \leq \frac{2}{\alpha}(1+K^{\frac{1}{p}}+A),
\end{align*}
which is uniformly bounded from above.

The uniform H\"ormander-Tian inequality \eqref{uniform_Skoda} also yields
\begin{align*}
    \int_X e^{-\alpha\varphi}\;\mu \leq A\cdot e^{-\alpha\sup_X\varphi} \leq A.
\end{align*}
Jensen's inequality therefore gives
\begin{align}
    \label{average_bound_omega}
    -\int_X \varphi \; \mu \leq \frac{1}{\alpha}\log\Big(\int_X e^{-\alpha\varphi}\;\mu\Big) \leq \frac{1}{\alpha}\log(A).
\end{align}
Taking $C_3= \max(\frac{2}{\alpha}(1+K^{\frac{1}{p}}+A), \frac{1}{\alpha}\log(A))$, we obtain the required inequality \eqref{sup_bound_average_bound}.
\end{proof}

For any $T>1$, define
\begin{align}
     \bar{E}(\varphi) = \sup_{t_0\in[0,T-1]}\int_{t_0}^{t_0+1}\int_{X}(\sup_X\varphi-\varphi) e^F \mu \; dt.
\end{align}

\begin{lemma}
    \label{Morrey-Trusinger}
    Let $T>1$ and $t_0\in [0,T)$. Assume that there exists a unit interval $I$ such that
    \begin{align*}
        \Omega_{s,t_0,t_1} \subset X\times (I\cap [t_0,t_1])\subset [0,T].
    \end{align*}
    Then there exists a uniform constant $C_4 = C_4(n,\alpha,A,K)>0$ such that
    \begin{align*}
        \int_{\Omega_{s,t_0,t_1}} \exp\Big(\beta\big(\frac{(-\varphi-s-t_1+t)_+}{A_{s,t_0,t_1,j}^{\frac{1}{n+2}}}\big)^{\frac{n+2}{n+1}}\Big) \;\mu \; dt\leq C_4\cdot \exp\Big(C_4\big(\bar{E}(\varphi)+j^{\frac{-1}{2}}(t_1-t_0)\big)\Big),
    \end{align*}
    where $\beta =  C_2^{-1}\cdot \alpha$.
   
\end{lemma}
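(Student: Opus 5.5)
The plan is to combine the pointwise comparison of Lemma \ref{auxiliary_inequality_varphi_psi} with the uniform exponential integrability of Lemma \ref{auxiliary_function_lemma}, and then to control $A_{s,t_0,t_1,j}$ by $\bar{E}(\varphi)$ plus an error coming from $\eta_j$.

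First I take $a=1$ in Lemma \ref{auxiliary_inequality_varphi_psi}, so that $b=\frac{n+1}{n+2}$. Raising \eqref{ineqaultiy_a} to the power $\frac{n+2}{n+1}$ gives, on $X\times[t_0,t_1]$,
\begin{align*}
\Big(\frac{(-\varphi-s-t_1+t)_+}{A_{s,t_0,t_1,j}^{\frac{1}{n+2}}}\Big)^{\frac{n+2}{n+1}} \leq C_2\big(-\psi_{s,t_0,t_1,j}+C_2A_{s,t_0,t_1,j}\big).
\end{align*}
Multiplying by $\beta=C_2^{-1}\alpha$ and exponentiating, the integrand is bounded by $e^{-\alpha\psi_{s,t_0,t_1,j}}\,e^{\alpha C_2A_{s,t_0,t_1,j}}$. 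Integrate this over $\Omega_{s,t_0,t_1}\subset X\times(I\cap[t_0,t_1])$. For each fixed $t$ we apply Lemma \ref{auxiliary_function_lemma} to get $\int_Xe^{-\alpha\psi_{s,t_0,t_1,j}}\mu\leq C_1$. The time interval has length at most $1$, so the whole integral is at most $C_1e^{\alpha C_2A_{s,t_0,t_1,j}}$.

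It then remains to show $A_{s,t_0,t_1,j}\leq \bar{E}(\varphi)+\tfrac12 j^{-1/2}(t_1-t_0)$. This follows from the elementary bound
\[
\eta_j(x)=\tfrac{x+\sqrt{x^2+j^{-1}}}{2}\leq x_++\tfrac12 j^{-1/2},
\]
together with $\int_Xe^F\mu=1$. These give
\[
A_{s,t_0,t_1,j}\leq\int_{t_0}^{t_1}\int_X(-\varphi-s-t_1+t)_+\,e^F\mu\,dt+\tfrac12 j^{-1/2}(t_1-t_0).
\]
The positive part vanishes outside $\Omega_{s,t_0,t_1}$, so the time integration is effectively over $I\cap[t_0,t_1]$. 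On $\Omega_{s,t_0,t_1}$ we have $0\leq -\varphi-s-t_1+t\leq-\varphi\leq\sup_X\varphi-\varphi$, where the last step uses $\sup_X\varphi\geq0$ from Lemma \ref{sup_X_varphi} and the first uses $s>0$, $t\leq t_1$. Hence the first term is at most $\int_I\int_X(\sup_X\varphi-\varphi)e^F\mu\,dt$. Writing $I=[t',t'+1]\subset[0,T]$, so that $t'\in[0,T-1]$, this is bounded by $\bar{E}(\varphi)$.

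Combining the two steps yields the claim with $C_4=\max(C_1,\alpha C_2)$. The only delicate points are the bookkeeping between the measures $\mu$ and $e^F\mu$, and the requirement that $I$ be a genuine unit interval inside $[0,T]$ so that it is admissible in the definition of $\bar{E}$. I expect the main obstacle to be the bound for $A_{s,t_0,t_1,j}$, in particular checking the sign conditions that allow $(-\varphi-s-t_1+t)_+$ to be replaced by $\sup_X\varphi-\varphi$.
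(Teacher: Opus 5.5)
Your proof is correct and follows the paper's argument: take $a=1$ in Lemma \ref{auxiliary_inequality_varphi_psi}, integrate against Lemma \ref{auxiliary_function_lemma} over a time interval of length at most one, and then bound $A_{s,t_0,t_1,j}$ using $\eta_j(x)\leq x_++\tfrac12 j^{-1/2}$. Your bound on $A_{s,t_0,t_1,j}$ is slightly cleaner than the paper's: estimating $(-\varphi-s-t_1+t)_+\leq \sup_X\varphi-\varphi$ directly on $\Omega_{s,t_0,t_1}$ avoids the paper's detour through $\sup_X\varphi\leq C_3$ and the extra additive $C_3$ it produces.
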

\begin{proof}
    For $a=1$,  inequality \eqref{ineqaultiy_a} and Lemma \ref{auxiliary_function_lemma} gives 
    \begin{align*}
        &\int_{I\cap[t_0,t_1]}\int_X \exp\Big(\beta\big(\frac{(-\varphi-s-t_1+t)_+}{A_{s,t_0,t_1,j}^{\frac{1}{n+2}}}\big)^{\frac{n+2}{n+1}}\Big) \;\mu \; dt  \\
        &\leq \int_{I\cap [t_0,t_1]} \int_X \exp\Big(\alpha(-\psi_{s,t_0,t_1,j}+C_2 A_{s,t_0,t_1,j}\big)\Big)\;d\mu\;dt 
        \leq C_1\cdot \exp\big(C_2 A_{s,t_0,t_1,j}\big).
    \end{align*}
    Moreover,
    \begin{align*}
        A_{s,t_0,t_1,j} &= \int_{t_0}^{t_1}\int_{X}\eta_j(-\varphi-s-t_1+t) e^F\mu\;dt \\
        &\leq\int_{\Omega_{s,t_0,t_1}} (-\varphi+C_3)e^F\mu\;dt + \int_{\Omega_{s,t_0,t_1}}(-C_3-s-t_1+t)e^F\mu\;dt +\frac{1}{2}j^{\frac{-1}{2}} \\
        &\leq\int_{X\times I} (-\varphi+C_3) e^F\mu \;dt + \int_{X\times I\setminus\Omega_{s,t_0,t_1}}(\varphi-C_3)e^F\mu\;dt +\frac{1}{2}j^{\frac{-1}{2}}(t_1-t_0) \\
        &\leq \bar{E}(\varphi)+C_3 +\frac{1}{2}j^{\frac{-1}{2}}(t_1-t_0),
    \end{align*}
    where we use Lemma \ref{sup_X_varphi} for the last inequality.
    Thus there exists a uniform constant $C_4>0$ such that
    \begin{align*}
          \int_{\Omega_{s,t_0,t_1}}\exp\Big(\beta\big(\frac{(-\varphi-s-t_1+t)_+}{A_{s,t_0,t_1,j}^{\frac{1}{n+2}}}\big)^{\frac{n+2}{n+1}}\Big) \;\mu \; dt\leq C_4\cdot \exp\Big(C_4\big(\bar{E}(\varphi)+j^{\frac{-1}{2}}(t_1-t_0)\big)\Big).
    \end{align*}
\end{proof}

We next obtain a uniform bound for the energy quantity $\bar{E}$.
\begin{lemma}
    \label{energy_bounded_by_p_entropy}
    For any $T>1$,
    there exists a uniform constant $C_5 = C_5(n,\alpha,A,K)>0$ such that for any $1\leq q \leq p$,
    \begin{align}
        \label{q_moment_bounded_by_p_entropy}
        \sup_{t_0\in [0,T-1]}\int_{t_0}^{t_0+1}\int_X (\sup_X\varphi-\varphi)^q e^F\mu \; dt \leq C_5.
    \end{align}
    In particular,
    \begin{align}
        \label{energy_bounded_by_p_entropy}
        \bar{E}(\varphi)\leq C_5.
    \end{align}
\end{lemma}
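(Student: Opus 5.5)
The plan is to prove a bound at each fixed time $t\in[0,T]$, uniform in $t$ and $T$, and then integrate over any unit interval $[t_0,t_0+1]$. The bound will come from an elementary pointwise Young-type inequality, in the spirit of the one used in Lemma \ref{sup_X_varphi}, combined with the uniform H\"ormander--Tian inequality \eqref{uniform_Skoda}. Since $\varphi(\cdot,t)\in\PSH(X,\omega)$ for each $t$, \eqref{uniform_Skoda} gives
\begin{align*}
\int_X e^{\alpha(\sup_X\varphi-\varphi)}\,\mu<A .
\end{align*}
The task is to trade $e^F$ against this exponential integrability.

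The key pointwise inequality is the following. For $v\geq 0$, $y\geq 0$ and $1\leq q\leq p$,
\begin{align*}
v^q\, y \leq 2^q\, y\,\big(\log(1+y)\big)^q + C_q\, e^{v},
\end{align*}
where $C_q=\sup_{v\geq0}v^qe^{-v/2}$. It follows from a two-case split:
\begin{itemize}
\item[(i)] if $v\leq 2\log(1+y)$, the first term on the right dominates;
\item[(ii)] otherwise $y< e^{v/2}$, so $v^q y\leq v^q e^{v/2}\leq C_q e^{v}$.
\end{itemize}

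We apply this with $y=e^F$ and $v=\alpha(\sup_X\varphi-\varphi)\geq 0$, and integrate against $\mu$:
\begin{align*}
\alpha^q\int_X(\sup_X\varphi-\varphi)^q e^F\mu \leq 2^q\int_X e^F\big(\log(1+e^F)\big)^q\mu + C_q A .
\end{align*}
Since $\log(1+e^F)\leq \log 2+F_+$, the first term is bounded by
\begin{align*}
2^q\int_X e^F(\log2+|F|)^q\mu .
\end{align*}
Because $e^F\mu$ is a probability measure and $q\leq p$, H\"older's inequality bounds this by a constant depending only on $p$ and $K=\|e^F\|_{L^1\log^pL(\mu)}$. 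Here $\int e^F|F|^p\mu$ is controlled by the $L^1\log^p L$ norm, just as $\int e^F(1+|F|)\mu\leq 1+K^{1/p}$ was used in Lemma \ref{sup_X_varphi}.

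This yields, for every $t$,
\begin{align*}
\int_X(\sup_X\varphi-\varphi)^q e^F\mu\leq C_5(n,\alpha,A,K,p).
\end{align*}
Integrating over $[t_0,t_0+1]$ and taking the supremum in $t_0$ gives \eqref{q_moment_bounded_by_p_entropy}. The case $q=1$ gives $\bar E(\varphi)\leq C_5$.

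The only step I expect to require care is matching the precise normalization of $\|e^F\|_{L^1\log^pL(\mu)}$ with $\int_X e^F(1+|F|)^p\mu$ (or $\int_X e^F\log^p(e+e^F)\mu$). The two differ by constants depending only on $p$, since $e^F\mu$ has mass $1$ and the contribution where $F<0$ is bounded by $\sup_{x\leq0}e^x|x|^p$. No parabolic argument is needed, since $T$ plays no role.
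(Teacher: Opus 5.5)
Your proposal is correct and follows essentially the paper's route: a generalized Young inequality trading $e^F$ against $e^{\alpha(\sup_X\varphi-\varphi)}$, then the uniform H\"ormander--Tian bound \eqref{uniform_Skoda} applied at each fixed time, then integration over the unit interval. The only cosmetic difference is that the paper proves the case $q=p$ and gets $1\leq q\leq p$ by H\"older, which makes uniformity in $q$ automatic. You instead treat each $q$ directly, which is equally fine, since your constants $2^q$, $C_q$ and $\alpha^{-q}$ stay bounded for $q\in[1,p]$.
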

\begin{proof}
   Applying the generalized Young's inequality again, we obtain
    \begin{align*}
        \big(\frac{\alpha}{2}(\sup_X\varphi-\varphi)\big)^p e^F \leq e^F(1+|F|)^p + C(p)e^{\alpha(\sup_X\varphi-\varphi)}.
    \end{align*}
    Therefore,
    \begin{align*}
        \int_{t_0}^{t_0+1}\int_X (\sup_X\varphi-\varphi)^p e^F\mu \; dt &\leq C_{5,1}\int_{t_0}^{t_0+1}\Big(\int_X e^F(1+|F|)^p\mu +e^{-\alpha(\varphi-\sup_X\varphi)} \mu \Big) dt \\
        &\leq C_{5,2}.
    \end{align*}
    H\"older's inequality then gives \eqref{q_moment_bounded_by_p_entropy} for $1\leq q\leq p$.
\end{proof}

\begin{proposition}
    \label{L^infty_bound_by_energy}
    There exists a constant $C_6 = C_6(n,\alpha,A,K,\bar{E}(\varphi))>0$, such that for any $T>1$,
    \begin{align}
        \label{L_infty_bound_1}
        \|\varphi\|_{L^\infty(X\times [0,T])} \leq C_6.
    \end{align}
\end{proposition}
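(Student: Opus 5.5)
Since Lemma \ref{sup_X_varphi} gives $0\le\sup_X\varphi\le C_3$, only the lower bound for $\varphi$ needs proof. The plan is a De Giorgi type iteration on the parabolic sublevel sets $\Omega_{s,t_0,t_1}$, in the spirit of \cite{GPT2023}, with $a=1$ throughout. We localize in time first. Fix a point $(x,\tau)$ with $\tau\in[0,T]$ where $\varphi$ is very negative. If $\tau\le 2$, a bound follows from the $t=0$ data and short time estimates. Otherwise, take $t_1=\tau$ and $t_0$ with $t_1-t_0\le 1$. Since $-\varphi-s-t_1+t\ge 0$ forces $-\varphi\ge s$, and $t_1-t\le 1$, a unit interval $I\supset[t_0,t_1]$ contains $\Omega_{s,t_0,t_1}$. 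So Lemma \ref{Morrey-Trusinger} applies. The requirement $t_1\ge t_0+\sup_X|\varphi(\cdot,t_0)|$ is a problem, because it ties $t_1-t_0$ to the unknown size of $\varphi$. To handle it, I would rather run the argument on a maximal time $\tau$ and use the shifted function $-\varphi-s-(t_1-t)$ with the $L^\infty$ bound taken at earlier times, bootstrapping: set $M(T)=\|\varphi\|_{L^\infty(X\times[0,T])}$ and choose $t_0=t_1-M$. The factor $j^{-1/2}(t_1-t_0)$ in Lemma \ref{Morrey-Trusinger} is then killed by letting $j\to\infty$ first. I expect that this only adds a factor of $M$ inside $I$ that can be absorbed by splitting $[t_0,t_1]$ into unit intervals, and that the estimate stays uniform because the relevant sets $\Omega$ concentrate within unit time of $t_1$.

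Next we derive the decay estimate. Set $\phi(s)=\int_{\Omega_{s,t_0,t_1}}e^F\mu\,dt$ and let $A_s=\lim_j A_{s,t_0,t_1,j}=\int_{\Omega_s}(-\varphi-s-t_1+t)e^F\mu\,dt$. Lemma \ref{Morrey-Trusinger} together with Lemma \ref{energy_bounded_by_p_entropy} bounds $\int_{\Omega_s}\exp(\beta(h/A_s^{1/(n+2)})^{(n+2)/(n+1)})\mu\,dt$ by a uniform $C$, where $h=(-\varphi-s-t_1+t)_+$. We pair this exponential integrability against $e^F\in L^1\log^pL(\mu)$ through the generalized Young inequality, exactly as in \cite[Lemma 1]{GPT2023}. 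This gives
\[
\int_{\Omega_s}h^{\frac{(n+2)p}{n+1}}e^F\mu\,dt\le C A_s^{\frac{p}{n+1}}.
\]
Hölder's inequality then gives $A_s\le C A_s^{1/(n+1)}\phi(s)^{1-\frac{n+1}{(n+2)p}}$, that is $A_s\le C\phi(s)^{\frac{n+1}{n}(1-\frac{n+1}{(n+2)p})}$. For $r\in(0,1)$ we have $r\,\phi(s+r)\le A_s$, so
\[
r\,\phi(s+r)\le C\phi(s)^{1+\delta_0},\qquad 1+\delta_0=\tfrac{n+1}{n}\Bigl(1-\tfrac{n+1}{(n+2)p}\Bigr).
\]
The exponent exceeds $1$ exactly when $p>n+1$ (indeed it needs $\frac{n+1}{(n+2)p}<\frac1{n+1}$), which matches the hypothesis.

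Finally we iterate. De Giorgi's lemma (as in \cite{Kolodziej98,GPT2023}) gives $\phi(s)=0$ for $s\ge s_0+C\phi(s_0)^{\delta_0}/(1-2^{-\delta_0})$. To start, $\phi(s_0)\le \bar E(\varphi)/s_0\le C_5/s_0$ by Chebyshev and Lemma \ref{energy_bounded_by_p_entropy}, since $-\varphi\le \sup_X\varphi-\varphi$. Choosing $s_0=1$, we conclude that $-\varphi(x,t_1)\le s_\infty$ for every $x$, because at $t=t_1$ the set $\Omega$ is exactly $\{-\varphi\ge s\}$, up to a parabolic neighbourhood. The step $\phi=0\Rightarrow$ pointwise bound requires continuity of $\varphi$, which holds. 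This bound is independent of $T$ and depends only on $n,\alpha,A,K,\bar E$.

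The main obstacle is the first step: the constraint on $t_1-t_0$ relative to $\sup|\varphi(t_0)|$, and making sure the time localization in Lemma \ref{Morrey-Trusinger} is compatible with it without losing uniformity in $T$.
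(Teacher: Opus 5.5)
Your skeleton (Lemma \ref{Morrey-Trusinger}, Young's inequality against $e^F\in L^1\log^pL$, H\"older, De Giorgi) matches the paper's. The gap is that the time-localization step you flag is never resolved, and your final choice $s_0=1$ is incompatible with it.

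Once you take $t_0=t_1-M$, which you need for $t_1-t_0\ge\sup_X|\varphi(\cdot,t_0)|$, a point of $\Omega_{s,t_0,t_1}$ only satisfies $t_1-t\le-\varphi-s\le M-s$. So for $s$ near $1$ the set $\Omega_{s,t_0,t_1}$ can spread over a time window of length about $M$. Two things then fail:
\begin{itemize}
\item Lemma \ref{Morrey-Trusinger} is not available with uniform constants. Its proof uses a unit interval both in $\int_I\int_X e^{-\alpha\psi}\mu\,dt\le C_1$ and in $A_{s,t_0,t_1,j}\le\bar E(\varphi)+C_3+\cdots$. Over a window of length $M$, the latter only gives $A$ of order $M\,\bar E(\varphi)$, and this enters exponentially through $e^{C_2A}$.
\item Chebyshev does not give $\theta(1)\le\bar E(\varphi)$, since $\bar E$ controls only unit windows; summing over windows gives about $\bar E(\varphi)\log M$.
\end{itemize}
So your constants secretly depend on $M$, and the hoped-for absorption by splitting into unit intervals is not carried out. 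Your sentence that the sets $\Omega$ concentrate within unit time of $t_1$ is true only for $s\ge M-1$. The separate case $\tau\le 2$ via short-time estimates is also not controlled by $(n,\alpha,A,K)$, and it is unnecessary.

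The missing idea is to start the iteration at a level tied to the maximum. Let $M=\max_{X\times[0,T]}(-\varphi)=-\varphi(x_1,t_1)$, assume $M>C_3+1$, and put $t_0=\max(0,t_1-M)$, $s_0=M-1$. Then:
\begin{itemize}
\item For every $s\ge s_0$ one has $\Omega_{s,t_0,t_1}\subset X\times[t_1-1,t_1]$, so Lemma \ref{Morrey-Trusinger} applies uniformly. The choice $t_0=\max(0,\cdot)$ also handles small $t_1$, because $\varphi(\cdot,0)=0$.
\item Chebyshev gives $\theta(s_0)\le\bar E(\varphi)/(M-1)$.
\item By continuity at $(x_1,t_1)$, $\theta(s)>0$ for $s<M$. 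Hence the iteration gives $M\le s_\infty\le M-1+C\,\theta(s_0)^{\delta}\le M-1+C\big(\bar E(\varphi)/(M-1)\big)^{\delta}$, that is, $M\le 1+C^{1/\delta}\bar E(\varphi)$.
\end{itemize}
The smallness that drives De Giorgi comes from $M$ being large, and the iteration only has to advance one unit.

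There is also a slip in your H\"older step. The power of $A_s$ on the right should be $A_s^{1/(n+2)}$, not $A_s^{1/(n+1)}$. This gives $1+\delta=\frac{n+2}{n+1}-\frac1p$, which exceeds $1$ exactly when $p>n+1$. Your exponent $\frac{n+1}{n}\bigl(1-\frac{n+1}{(n+2)p}\bigr)$ is incorrect, and it exceeds $1$ for $p>(n+1)^2/(n+2)$, not exactly for $p>n+1$.
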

\begin{proof}
Without loss of generality, assume that $M:=\sup_{(x,t)\in X\times [0,T]}(-\varphi(x,t)) > C_3+1$.
Set $(x_1,t_1)$ be the point in $X\times (0,T]$ such that $-\varphi(x_1,t_1) = \sup_{(x,t)\in X\times [0,T]}(-\varphi(x,t))$.
Let $t_0 = \max(0,t_1-M)$. Then $t_1-t_0 \geq \sup_{x\in X}|\varphi(x,t_0)|$. Let $s_0 = M-1$. The set
$\Omega_{s_0,t_0}$ is nonempty, and any $(x,t)\in \Omega_{s_0,t_0,t_1}$ satisfies
\begin{align*}
   0\leq t_1-t \leq -\varphi(x,t)-s_0 \leq M-(M-1)=1.
\end{align*}
It follows that $\Omega_{s_0,t_0,t_1} \subset X\times [t_1-1,t_1]$. 
Set $s\geq s_0$. Then the condition in the statement of Lemma \ref{Morrey-Trusinger} is satisfied.

Applying the generalized Young's inequality, we obtain
\begin{align*}
    \Big(\frac{\beta}{2}\big(\frac{(-\varphi-s-t_1+t)_+}{A_{s,t_0,t_1,j}^{\frac{1}{n+2}}}\big)^{\frac{n+2}{n+1}}\Big)^p &\cdot e^F \leq \\
    & e^F(1+|F|)^p+ C(p)\cdot \exp\Big(\beta\big(\frac{(-\varphi-s-t_1+t)_+}{A_{s,t_0,t_1,j}^{\frac{1}{n+2}}}\big)^{\frac{n+2}{n+1}}\Big).
\end{align*}
Then by Lemma \ref{Morrey-Trusinger},
\begin{align*}
    &\int_{\Omega_{s,t_0,t_1}}  \Big(\frac{\beta}{2}\big(\frac{(-\varphi-s-t_1+t)_+}{A_{s,t_0,t_1,j}^{\frac{1}{n+2}}}\big)^{\frac{n+2}{n+1}}\Big)^p\cdot e^F \mu\;dt \\
    &\leq \int_{\Omega_{s,t_0,t_1}}(1+|F|)^p e^F \mu\;dt + C(p)\int_{\Omega_{s,t_0,t_1}} \exp\Big(\beta\big(\frac{(-\varphi-s-t_1+t)_+}{A_{s,t_0,t_1,j}^{\frac{1}{n+2}}}\big)^{\frac{n+2}{n+1}}\Big) \mu \;dt \\
    &\leq 2^p(1+K)+C_p C_4\cdot e^{C_4\bar{E}(\varphi)},
\end{align*}
which implies that
\begin{align*}
    \int_{\Omega_{s,t_0,t_1}}(-\varphi-s-t_1+t)_+^{\frac{(n+2)p}{n+1}} e^F\mu\;dt \leq \beta^{-p}\cdot A_{s,t_0,t_1,j}^{\frac{p}{n+1}}\cdot C_{6,1}(1+e^{C_4(\bar{E}(\varphi)+j^{\frac{-1}{2}}(t_1-t_0))}).
\end{align*}
Define the measure of the sublevel set by
\begin{align*}
     \theta_{t_0,t_1}(s) &:= \int_{\Omega_{s,t_0,t_1}}e^F \mu\; dt.
\end{align*}
Let $I = [\max(t_0,t_1-1),t_1]$.
By the definition of $\Omega_{s,t_0,t_1}$ and the inequality $s\geq M-1$, 
    \begin{align}
    \label{sublevel_bound}
        \theta_{t_0,t_1}(s) &\leq \int_{\Omega_{s,t_0,t_1}} \frac{-\varphi}{t_1-t+s}\; e^F\mu \; dt\\ \nonumber
        &\leq \frac{1}{M-1}\int_{I}\int_X (-\varphi)_+e^F\mu\;dt \\ \nonumber
        &\leq \frac{1}{M-1}\int_{\max(0,t_1-1)}^{t_1}\int_X (\sup_X\varphi-\varphi) e^F\mu \; dt \leq \frac{\bar{E}(\varphi)}{M-1}.
    \end{align}

Set $A_{s,t_0,t_1} = \lim_{j\to\infty} A_{s,t_0,t_1,j}$. H\"older's inequality gives
    \begin{align*}
        A_{s,t_0,t_1} &= \int_{\Omega_{s,t_0,t_1}}(-\varphi-s-t_1+t)_+e^F\mu\;dt \\
        &\leq \Big(\int_{\Omega_{s,t_0,t_1}}(-\varphi-s-t_1+t)_+^{\frac{(n+2)p}{n+1}}e^F\mu\;dt\Big)^{\frac{n+1}{(n+2)p}}\cdot \theta_{t_0,t_1}(s)^{1-\frac{n+1}{(n+2)p}} \\
        &\leq \big(C_{6,1}(1+e^{C_4\bar{E}(\varphi)})\big)^{\frac{n+1}{(n+2)p}}\cdot\beta^{-\frac{n+1}{n+2}}\cdot A_{s,t_0,t_1}^{\frac{1}{n+2}}\cdot \theta_{t_0,t_1}(s)^{1-\frac{n+1}{(n+2)p}},
    \end{align*}
   Set $C_{6,2} = \big(C_{6,1}(1+e^{C_4\bar{E}(\varphi)})\big)^{\frac{1}{p}}\cdot\beta^{-1}$, $1+\delta = \frac{n+2}{n+1}-\frac{1}{p}$, then we have
   \begin{align*}
       A_{s,t_0,t_1} \leq C_{6,2}\cdot \theta_{t_0,t_1}(s)^{1+\delta}.
   \end{align*}
   On the other hand,
   \begin{align*}
       A_{s,t_0,t_1} \geq \int_{\Omega_{s+r,t_0,t_1}}(-\varphi-s-t_1+t)_+e^F\mu\;dt \geq r\cdot \theta_{t_0,t_1}(s+r).
   \end{align*}
   Combining these bounds gives the iteration inequality
   \begin{align}
       r\cdot \theta_{t_0,t_1}(s+r)\leq C_{6,2}\cdot \theta_{t_0,t_1}(s)^{1+\delta}
   \end{align}
   for any $r>0$, $s\geq s_0$. 
    Next we apply De Giorgi's iteration scheme.
    Let $s_{i+1} = s_i+r_i$ for $i\in\mathbb{N}$, $r_i = eC_{6,2}\theta_{t_0,t_1}(s_i)^\delta$. Then $s_{i+1} = s_0+\sum_{0\leq j\leq i}r_j$, and
    \begin{align*}
        \log\theta_{t_0,t_1}(s_{i+1}) -\log\theta_{t_0,t_1}(s_i) &\leq -\log(r_i)+\log(C_{6,2})+\delta\log\theta_{t_0,t_1}(s_i) =-1.
    \end{align*}
    Since $\theta_{t_0,t_1}(s)$ is a decreasing continuous function, the iteration above shows that $s_\infty \geq \sup_{X\times [0,T]}(-\varphi) = M$. Then 
    \begin{align*}
      M\leq   s_\infty &= s_0+ \sum_{i=0}r_i \leq s_0+C_{6,2}e\big(\sum_{i=0}^\infty e^{-\delta i}\big)\theta_{t_0,t_1} (s_0)^\delta \\
        &\leq M-1+ \frac{eC_{6,2}}{1-e^{-\delta}}\theta_{t_0,t_1}(s_0)^\delta.
    \end{align*}
     Combine inequality \eqref{sublevel_bound} with the inequality above, we have
     \begin{align*}
         \big(\frac{eC_{6,2}}{1-e^{-\delta}}\big)^{-\frac{1}{\delta}} \leq \frac{\bar{E}(\varphi)}{M-1}
     \end{align*}
    Taking $C_6 = 1+ \big(\frac{eC_{6,2}}{1-e^{-\delta}}\big)^{\frac{1}{\delta}}\cdot \bar{E}(\varphi)$,
   we obtain the inequality \eqref{L_infty_bound_1}.
\end{proof}

\subsection{Uniform estimates and smooth convergence}
\label{conclusion_thm_1}
We now complete the proof of Theorem \ref{main_thm_1}. Lemma \ref{energy_bounded_by_p_entropy} gives a uniform upper bound for $\bar{E}(\varphi)$. Thus the bound $C_6$ for $\|\varphi\|_{L^\infty(X\times [0,T))}$ in Proposition \ref{L^infty_bound_by_energy} is uniform. This proves the uniform $L^\infty$-estimate for $\varphi$ asserted in Theorem \ref{main_thm_1}.
\medskip

The strategy to show the smooth convergence of the flow is quite standard, so we only sketch the key steps of the proof.  The constants $\hat{C}_i$ below are uniform in time $t$, but may depend on the given flow. Since
\begin{align*}
    (\partial_t-\Delta_{\omega_\varphi})\partial_t\varphi = 0,
\end{align*}
both the supremum and infimum of $\partial_t\varphi$ are attained at $t=0$. It follows that 
\begin{align*}
    \|\partial_t\varphi\|_{L^\infty(X\times [0,+\infty))} \leq \hat{C}_1,
\end{align*} 
and also the boundedness of the volume ratio. Then we can choose a constant $\hat{C}_2>0$ such that
\begin{align*}
    (\partial_t-\Delta_{\omega_\varphi})(\log(n+\Delta\varphi)-\hat{C}_2\varphi) \leq -\hat{C}_2\tr_{\omega_\varphi}\omega + \hat{C}_3.
\end{align*}
At a maximum point of $\log(n+\Delta\varphi)-\hat{C}_2\varphi$, we obtain that $\tr_{\omega_\varphi}\omega\leq \frac{\hat{C}_3}{\hat{C}_2}$. Then by the volume ratio bound and the $L^\infty$-bound of $\varphi$, we have $\|\varphi\|_{C^{1,\bar{1}}(X)}\leq \hat{C}_4$ for any $t\in [0,+\infty)$. Therefore, the flow is uniformly parabolic, and standard estimates give uniform higher-order bounds for $t\in [0,+\infty)$.

To prove convergence, we consider the twisted Mabuchi functional
\begin{align*}
    M(\varphi) = H(\varphi)-E^{Ric(\omega)-\chi}(\varphi),
\end{align*}
where $\chi = -\partial\bar\partial\log(e^F\mu)$ and
\begin{align*}
    H(\varphi) &= \int_{X}\log\big(\frac{\omega_\varphi^n}{\omega^n}\big)\frac{\omega_\varphi^n}{V_\omega}, \\
     E^{Ric(\omega)-\chi} &= \frac{1}{V_\omega}\sum_{i=0}^{n-1}\int_X \varphi(Ric(\omega)-\chi)\wedge\omega^i\wedge\omega_\varphi^{n-1-i}.
\end{align*}
A direct computation gives,
\begin{align*}
    \frac{d}{dt}M(\varphi) &= \frac{1}{V_\omega}\int_X\partial_t\varphi\;(-\Delta_{\omega_\varphi}\log(e^F\mu)-R(\omega_\varphi))\;\omega_\varphi^n \\
    &=\frac{1}{V_\omega}\int_X \partial_t\varphi \cdot \Delta_{\omega_\varphi}\log(\frac{\omega_\varphi^n}{e^F\mu}) \;\omega_\varphi^n \\
    &= -\frac{1}{V_\omega}\int_X |\nabla\partial_t\varphi|^2_{\omega_\varphi}\;\omega_\varphi^n \leq 0.
\end{align*}
Again by direct computation,
\begin{align*}
    \frac{d^2}{dt^2}M(\varphi) \geq \hat{C}_5 \frac{d}{dt}M(\varphi).
\end{align*}
Since $M(\varphi)$ is bounded below, by the computations above, for any sequence $t_i\rightarrow \infty$, $\frac{d}{dt}M(\varphi)$ converges to $0$. Together with the uniformly higher-order estimates, this shows that $\omega_\varphi$ converges to a limit $\omega_\infty$ satisfying $\frac{1}{V_\omega}\omega_\infty^n = e^F\mu$. By the uniqueness of the solution, $\omega_\infty$ is unique, and the determination of the potential function is up to the difference of a constant. Finally, since 
\begin{align*}
    \int_X\varphi \; e^F\mu
\end{align*}
is decreasing along the flow, this implies that $\varphi$ converges smoothly to $\varphi_\infty$, such that $\omega_{\varphi_\infty} = \omega_\infty$. This completes the proof.

\section{Uniform H\"ormander-Tian inequality along the flow}
\label{Uniform_Skoda}

In this section, we show the proof of Theorem \ref{main_thm_2}. We will follow the notions introduced in the Introduction section. Our strategy is to apply Theorem \ref{main_thm_1} to obtain uniform apriori estimates. Then we adapt an interpolation argument, which reduces the proof of uniform H\"ormander-Tian inequality to estimates of volume ratios and a uniform H\"ormander-Tian inequality with respect to Fubini-Study metrics (\cite[Proposition 2.5]{DNGG23}).
\medskip

As stated in the Introduction, by \cite[Proposition 1.1]{Cao26} and \cite[Theorem 1.3]{LiYang24}, the conditions in the statement of Theorem \ref{main_thm_1} are satisfied for any $s\in\mathbb{D}^*$. The uniform $L^\infty$-estimate and smooth convergence therefore follows from Theorem \ref{main_thm_1}.
\medskip

In the following, we will prove the uniform H\"ormander-Tian inequality. We begin by comparing the measures $\tilde{\omega}_s^n$ and $\mu_s$.
Since $e^{-\psi}$ has analytic singularities, there exists a $\mathbb{Q}$-line bundle $\mathcal{K}$ on $\mathcal{X}$, such that $\mathcal{K}|_X = K_{X/\mathbb{D}^*}$, and $e^{-\psi}$ extends to a Hermitian metric on $\mathcal{K}$. The Hermitian metric $e^{-\psi}$ equivalently defines a measure $\nu_s$ on $X_s$. Let $\{E_i\}_{i\in I}$ be the set of irreducible components of the central fiber of $\mathcal{X}$. Since $\mathcal{X}$ is a semi-stable reduction, $\mathcal{X}_0 = \sum_{i\in I}E_i$ is a simple normal cross divisor. The relative log canonical divisor is given by
\begin{align*}
    K_{\mathcal{X}/\mathbb{D}}^{\log} := (K_{\mathcal{X}}+\mathcal{X}_{0,{\rm red}})-\pi^*(K_{\mathbb{D}}+[0]) = K_{\mathcal{X/\mathbb{D}}}.
\end{align*}
We write
\begin{align*}
    K_{\mathcal{X}/\mathbb{D}}^{\log} = \mathcal{K} + \sum_{i\in I} a_i E_i,
\end{align*}
where the coefficient $a_i$ is the log discrepancy along $E_i$. We use the normalizaton $\min_{i\in I}(a_i) = 0$. The skeleton $\Delta_{\mathcal{K}}\subset \Delta_{\mathcal{X}}$ is the union of the valuations where the log discrepancy equal to zero.
For $J\subset I$, set $E_J = \cap_{i\in J}E_i$.
Let $x\in E_J$ and let
$\mathcal{U}$ be an open neighborhood of $x$ in $\mathcal{X}$. Let $(z_0,\ldots,z_n)$ be a coordinate of $\mathcal{U}$. We may assume that $|J|=p+1$ and that on $\mathcal{U}$, $s= \prod_{0\leq i\leq p} z_i$.

For the Calabi-Yau family, choose $\Omega$ as a global section of $\mathcal{K}$ whose restriction on $X/\mathbb{D}^*$ is a nowhere-vanishing holomorphic $n$-form. We consider the measure $\nu_s = (\sqrt{-1})^{n^2}\Omega\wedge\overline{\Omega}|_{X_s}$.

 \begin{lemma}
     \label{volume_ratio_bound}
     There exist uniform constant $C_1, C_2>0$ independent of $s\in \mathbb{D}^*$, such that
     \begin{align}
        \label{volume_ratio_bound_inequality}
         C_1^{-1}\cdot |s|^{2C_2} \leq \frac{\tilde{\omega}_s^n}{\mu_s} \leq C_1\cdot |s|^{-2 C_2}.
     \end{align}
 \end{lemma}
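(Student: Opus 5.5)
The plan is to reduce \eqref{volume_ratio_bound_inequality} to a local computation near each point of the central fiber, together with a bound on the normalizing constant $\int_{X_s}\nu_s$. Since $\mu_s=\nu_s/\int_{X_s}\nu_s$, we have
\begin{align*}
    \frac{\tilde{\omega}_s^n}{\mu_s} = \Big(\int_{X_s}\nu_s\Big)\cdot\frac{\tilde{\omega}_s^n}{\nu_s}.
\end{align*}
For the first factor we use the standard asymptotics from \cite{BJ17}, or a direct computation in the charts below: $\int_{X_s}\nu_s\sim |s|^{2\kappa}\lambda(s)^{d}$, where $\kappa=\min_i a_i=0$ after our normalization and $d=\dim\Delta_{\mathcal{K}}$. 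In particular
\begin{align*}
    C^{-1}|s|^{\epsilon}\leq \int_{X_s}\nu_s\leq C|s|^{-\epsilon}
\end{align*}
for any fixed $\epsilon>0$, because $\lambda(s)^d=|\log|s||^d$ is dominated by any power of $|s|^{-1}$. If the precise asymptotic is not quoted, it suffices to bound the integral crudely from both sides by powers of $|s|$. The upper bound follows by integrating the local expression below. The lower bound follows by restricting to a chart centered at a generic point of a component $E_i$ with $a_i=0$.

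For the second factor, cover $\mathcal{X}_0$ by finitely many charts $\mathcal{U}$ as in the text, with coordinates $(z_0,\ldots,z_n)$ and $s=\prod_{i=0}^p z_i$. Extend these by finitely many charts away from $\mathcal{X}_0$, where everything is smooth and bounded over $|s|\geq r>0$. On $\mathcal{U}$, write $\Omega = h\cdot\prod_{i=0}^p z_i^{a_i}\,\Omega_{\rm log}$, where $h$ is a nonvanishing holomorphic function and $\Omega_{\rm log}$ is the local frame of $K^{\log}_{\mathcal{X}/\mathbb{D}}$, namely the residue $\frac{dz_0\wedge\cdots\wedge dz_n}{ds}$. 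The $a_i$ are rational, so we may pass to a power if needed. On $X_s\cap\mathcal{U}$ we parametrize by $(z_1,\ldots,z_n)$, which forces $z_0=s/(z_1\cdots z_p)$. Then
\begin{align*}
    \Omega_{\rm log}|_{X_s}=\pm\frac{dz_1}{z_1}\wedge\cdots\wedge\frac{dz_p}{z_p}\wedge dz_{p+1}\wedge\cdots\wedge dz_n,
\end{align*}
and therefore
\begin{align*}
    \nu_s = |h|^2\prod_{i=0}^p|z_i|^{2a_i}\prod_{i=1}^p|z_i|^{-2}\,dV_{\rm eucl}(z_1,\ldots,z_n).
\end{align*}

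Next we need $\tilde{\omega}_s^n$ in the same coordinates. Since $\tilde{\omega}$ is a smooth Kähler form on $\mathcal{X}$, it is comparable to $\sum_{i=0}^n\sqrt{-1}dz_i\wedge d\bar z_i$ on $\mathcal{U}$. Restricted to $X_s$, we have $dz_0=z_0\sum_{i=1}^p(-dz_i/z_i)$, so $\tilde{\omega}_s^n$ is comparable to $\big(1+\sum_{i=1}^p|z_0|^2/|z_i|^2\big)dV_{\rm eucl}$. The factor here lies between $1$ and a bounded multiple of $\max_i|z_i|^{-2}$. Since each $|z_i|\geq |s|$ on $\mathcal{U}$ (all coordinates are bounded by $1$ and their product is $s$), this gives
\begin{align*}
    1\lesssim \frac{\tilde{\omega}_s^n}{dV_{\rm eucl}}\lesssim |s|^{-2}.
\end{align*}
Combining with the formula for $\nu_s$, the ratio $\tilde{\omega}_s^n/\nu_s$ is, up to bounded factors, equal to
\begin{align*}
    \prod_{i=0}^p|z_i|^{-2a_i}\prod_{i=1}^p|z_i|^{2}\cdot(\text{factor in }[1,|s|^{-2}]).
\end{align*}
Using $|s|\leq|z_i|\leq1$ and $a_i\geq0$, every monomial here lies between $|s|^{N}$ and $|s|^{-N}$, with $N$ depending only on $n$, $\max_i a_i$ and $p$. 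By compactness of $\mathcal{X}_0$ and finiteness of the cover, this yields $C_1^{-1}|s|^{2C_2}\leq \tilde{\omega}_s^n/\nu_s\leq C_1|s|^{-2C_2}$. Multiplying by the bound on $\int_{X_s}\nu_s$ finishes the proof after enlarging $C_2$.

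The main obstacle is the careful bookkeeping in the charts. One must check that the restriction of $\Omega$ to fibers is given by the residue formula with exactly the exponents $a_i$ under the normalization $\min_i a_i=0$. One must also control the comparison of $\tilde{\omega}_s^n$ with the Euclidean measure uniformly up to the corners. Both reduce to the observation $|s|\leq|z_i|\leq1$, and I would carry this out first in the deepest stratum ($|J|=n+1$) and then note that the other strata are easier.
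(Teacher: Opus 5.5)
Your proposal is correct and follows essentially the same route as the paper. Both arguments work in semistable charts with $s=\prod_{i=0}^p z_i$ and compare $\tilde{\omega}_s^n$ and $\nu_s$ to a common reference form: you use the Euclidean measure in $(z_1,\ldots,z_n)$, the paper uses the log frame $\Omega_{\mathcal{X}_s}\wedge\overline{\Omega}_{\mathcal{X}_s}$. Both then bound the resulting monomials via $|s|\le|z_i|\le 1$ and control $\int_{X_s}\nu_s\approx\lambda(s)^d$ via \cite{BJ17}. Your optional direct two-sided bound on $\int_{X_s}\nu_s$ (integrate the chart expression for the upper bound; use a chart at a generic point of a component with $a_i=0$ for the lower bound) makes that step self-contained, but it is not a different method.
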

 \begin{proof}
Let $\mathcal{U}$ be a precompact open chart where $z_0\neq 0$.
On the local chart $\mathcal{U}$, the line bundle $K_{\mathcal{X}}$ has a local trivialization $\Omega_{\mathcal{X}} = dz_0\wedge\ldots\wedge dz_n$.
By the adjunction formula, on $X_s\cap \mathcal{U}$, we have a local trivialization of $K_{\mathcal{X}/\mathbb{D}}$:
\begin{align*}
    \Omega_{\mathcal{X}_s} =  \big(\bigwedge_{1\leq i\leq p}\frac{dz_i}{z_i}\big)\wedge\big(\bigwedge_{p+1\leq j\leq n} dz_j\big).
\end{align*}
On $X_s\cap \mathcal{U}$, we have
\begin{align*}
    \tilde{\omega}_s &\approx \sqrt{-1}\sum_{0\leq i\leq n} dz_i\wedge d\bar{z}_i \\
    &= \sqrt{-1}\sum_{1\leq i\leq n} dz_i\wedge d\bar{z}_i + 
    \sqrt{-1}\sum_{1\leq i,j\leq p} \frac{|z_0|^2}{z_i\bar{z}_j}dz_i\wedge d\bar{z}_j.
\end{align*}
Since
\begin{align*}
    &\bigwedge^n \big(\sqrt{-1}\sum_{1\leq i\leq n} dz_i\wedge d\bar{z}_i + 
    \sqrt{-1}\sum_{1\leq i,j\leq p} \frac{|z_0|^2}{z_i\bar{z}_j}dz_i\wedge d\bar{z}_j\big) \\
    &= \big(1+\sum_{1\leq i\leq p}\frac{|z_0|^2}{|z_i|^2}\big)\bigwedge_{1\leq i\leq n}(\sqrt{-1}dz_i\wedge d\bar{z}_i) 
    = \big(\sum_{0\leq i\leq p}\frac{|s|^2}{|z_i|^2} \big)(\sqrt{-1})^{n^2}\Omega_{\mathcal{X}_s}\wedge\overline{\Omega}_{\mathcal{X}_s},
\end{align*}
we have
\begin{align*}
    \tilde{\omega}_s^n = \rho_1\cdot \big(\sum_{0\leq i\leq p}\frac{|s|^2}{|z_i|^2} \big) (\sqrt{-1})^{n^2}\Omega_{\mathcal{X}_s}\wedge\bar{\Omega}_{\mathcal{X}_s},
\end{align*}
where there exists $C_{1,1}>0$ such that $C_{1,1}^{-1}<\rho_1<C_{1,1}$.
Similarly, on ${X}_s\cap \mathcal{U}$, $\mathcal{K}$ has a trivialization 
\begin{align*}
    \Omega_{\mathcal{K}} = (\prod_{0\leq i\leq p} z_i^{a_i})\Omega_{\mathcal{X}_s}.
\end{align*}
And the unnormalized measure associated to the holomorphic $n$-form is
\begin{align*}
    \nu_s = \rho_2\cdot (\prod_{0\leq i\leq p}|z_i|^{2a_i})\cdot (\sqrt{-1})^{n^2} \Omega_{\mathcal{X}_s}\wedge\bar{\Omega}_{\mathcal{X}_s},
\end{align*}
where there exists $C_{1,2}>0$ such that $C_{1,2}^{-1}<\rho_2<C_{1,2}$.

Meanwhile, under our choice of normalization $\min_{i\in I} (a_i)=0$, by \cite[Theorem A, C]{BJ17}, $\int_{X_s}\nu_s \approx \lambda(s)^d$ for $0\leq d\leq n$. Note that $|s|\leq \lambda(s)=|\log|s|| \leq |s|^{-1}$ when $|s|<e^{-1}$. Then by choosing $C_2 = \sum_{0\leq i\leq p}a_i+2$, there exists $C_{1,3}>0$, such that on $X_s\cap \mathcal{U}$,
\begin{align*}
    C_{1,3}^{-1}|s|^{2C_2} \leq \frac{\tilde{\omega}_s^n}{\mu_s} \leq C_{1,3}\cdot |s|^{-2 C_2}.
\end{align*}
Since a neighborhood of $\mathcal{X}_0$ can be covered by finitely many local charts $\mathcal{U}$, the inequality \eqref{volume_ratio_bound_inequality} is proved.
 \end{proof}

\begin{lemma}
    \label{monotone_of_volume_ratio}
    Let $\tilde{\omega}_{s,t} = \lambda(s)\cdot \omega_{s,t}$, $\rho_{s,t} = \frac{\tilde{\omega}_{s,t}^n}{\mu_s}$. Then for any $p> 1$,
    \begin{align*}
        \int_{X_s} \rho_{s,t}^p \;\mu_s
    \end{align*}
    is decreasing along $t\in [0,+\infty)$.
\end{lemma}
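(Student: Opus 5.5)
The plan is to differentiate the $L^p$-norm along the flow and show the derivative is nonpositive, using that $\log\rho_{s,t}$ satisfies a heat equation. By the flow \eqref{family_of_parabolic_Monge_Ampere_flow}, $\frac{1}{V_{\omega_s}}\omega_{s,t}^n = e^{\partial_t\varphi_s}\mu_s$. Hence $\rho_{s,t} = \lambda(s)^n V_{\omega_s} e^{\partial_t\varphi_s}$, where the constant $c_s := \lambda(s)^nV_{\omega_s}$ is independent of $t$. Writing $u=\partial_t\varphi_s$, we therefore have
\begin{align*}
\int_{X_s}\rho_{s,t}^p\;\mu_s = c_s^p\int_{X_s} e^{pu}\;\mu_s .
\end{align*}

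Differentiating the flow equation in $t$ gives $\partial_t u = \Delta_{\omega_{s,t}} u$, as already used in Section \ref{conclusion_thm_1}. Since $\mu_s = e^{-u}\frac{\omega_{s,t}^n}{V_{\omega_s}}$, I would write
\begin{align*}
\frac{d}{dt}\int_{X_s} e^{pu}\mu_s = \frac{1}{V_{\omega_s}}\int_{X_s} p\,e^{(p-1)u}\,\Delta_{\omega_{s,t}}u\;\omega_{s,t}^n .
\end{align*}
The measure $\mu_s$ is fixed in time, so only $e^{pu}$ is differentiated. Integrating by parts with respect to $\omega_{s,t}$ turns this into
\begin{align*}
-\frac{p(p-1)}{V_{\omega_s}}\int_{X_s} e^{(p-1)u}\,|\nabla u|^2_{\omega_{s,t}}\;\omega_{s,t}^n \le 0,
\end{align*}
because $p>1$. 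This gives the claimed monotonicity on $[0,+\infty)$.

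The only points needing care are the following.
\begin{itemize}
\item The smoothness of $\varphi_s$ on $X_s\times[0,\infty)$, which comes from Theorem \ref{main_thm_2} (via Theorem \ref{main_thm_1}), justifies differentiating under the integral sign.
\item The integration by parts is legitimate because $X_s$ is compact without boundary.
\item The normalizing constants must be tracked correctly; they are time independent, so they do not affect monotonicity.
\end{itemize}
I do not expect a genuine obstacle: the essential point is recognizing that the volume ratio equals $e^{\partial_t\varphi}$ up to a constant. For $t=\infty$ the statement extends by smooth convergence, though only $[0,+\infty)$ is needed.
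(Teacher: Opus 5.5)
Your proposal is correct and follows the paper's own argument. The paper also uses that $\log\rho_{s,t}=\partial_t\varphi_s$ up to a time-independent constant (in fact exactly, since $\lambda(s)^nV_{\omega_s}=L^n\cdot[X_s]=1$) solves the heat equation $\partial_t\log\rho_{s,t}=\Delta_{\omega_{s,t}}\log\rho_{s,t}$. It then differentiates $\int_{X_s}\rho_{s,t}^p\,\mu_s$ and integrates by parts to obtain $-p(p-1)\int_{X_s}\rho_{s,t}^{p-3}|\nabla\rho_{s,t}|^2_{\omega_{s,t}}\frac{\omega_{s,t}^n}{V_{\omega_s}}\le 0$, which is your expression rewritten in terms of $\rho_{s,t}$ instead of $u$.
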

\begin{proof}
    The proof is by a direct computation.
    By the parabolic equation \eqref{family_of_parabolic_Monge_Ampere_flow},
    \begin{align*}
        \partial_t\log(\rho_{s,t}) = \Delta_{\omega_{s,t}}\partial_t\varphi_s = \Delta_{\omega_{s,t}}\log(\frac{\tilde{\omega}_{s,t}^n}{\mu_s}) = \Delta_{\omega_{s,t}}\log(\rho_{s,t}).
    \end{align*}
    Then
    \begin{align*}
        \frac{d}{dt}\int_{X_s} \rho_{s,t}^p \; \mu_s &= p\int_{X_{s}}\rho_{s,t}^{p-1}\partial_t\log(\rho_{s,t})\frac{\omega_{s,t}^n}{V_{\omega_s}} \\
        &= -p(p-1)\int_{X_s} \rho_{s,t}^{p-3} |\nabla\rho_{s,t}|^2_{\omega_{s,t}}\frac{\omega_{s,t}^n}{V_{\omega_s}} \leq 0.
    \end{align*}
\end{proof}
Let $q = \frac{p}{p-1}$.
Combining this monotonicity with Lemma \ref{volume_ratio_bound}, we obtain the estimate needed below:
\begin{align}
\label{L^p_bound_of_volume_ratio}
    \Big(\int_{X_s} \rho_{s,t}^p \mu_s \Big)^{\frac{1}{p}} &\leq \Big(\int_{X_s}\rho_{s,0}^p\mu_s\Big)^{\frac{1}{p}} \\ \nonumber
    &\leq \Big(C_1^{p-1} |s|^{-2C_2(p-1)} \int_{X_s} \tilde{\omega}_{s}^n\Big)^{\frac{1}{p}} \\ \nonumber
    &= C_1^{\frac{1}{q}}|s|^{-2C_2 q^{-1}}.
\end{align}

We also need a uniform H\"ormander-Tian type estimate for Fubini-Study metrics. Recall that
we have the embedding $\iota: \mathcal{X}\xrightarrow{|m\mathcal{L}|} \mathbb{P}^N\times \mathbb{D}$, and $m\tilde{\omega}_{s} = \iota^*\omega_{\rm FS}|_{X_s}$.
\begin{lemma}
    \label{Skoda_for_FS}
    There exists a uniform constant $C_3>0$ independent of $s\in\mathbb{D}^*$, such that for any $s\in \mathbb{D}^*$, any $\tilde{u}_s\in \PSH(X_s,\tilde{\omega}_s)$ with $\sup_{X_s}\tilde{u}_s = 0$, we have
    \begin{align}
        \label{Skoda_for_FS_inequality}
        \int_{X_s} e^{\frac{-1}{nm^{n-1}}\tilde{u}_s} \;\tilde{\omega}_s^n \leq C_3 e^{C_3\lambda(s)}.
    \end{align}
\end{lemma}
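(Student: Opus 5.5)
The plan is to reduce the statement to a uniform Skoda-type inequality for $\omega_{\rm FS}$-plurisubharmonic functions on projective varieties of bounded degree in $\mathbb{P}^N$, namely \cite[Proposition 2.5]{DNGG23}, and to absorb the sup-normalization loss into the factor $e^{C_3\lambda(s)}$. The exponent $\frac{1}{nm^{n-1}}$ is chosen to match that result. Since $m\tilde{\omega}_s = \iota^*\omega_{\rm FS}|_{X_s}$, the function $u := m\tilde{u}_s$ lies in $\PSH(X_s,\iota^*\omega_{\rm FS})$. Moreover $\iota(X_s)\subset\mathbb{P}^N$ is an $n$-dimensional subvariety of degree $\int_{X_s}(m\tilde\omega_s)^n = m^n$, independent of $s$ because of the normalization $L^n\cdot[X_s]=1$. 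Also $\tilde u_s/(nm^{n-1}) = u/(nm^n) = u/(n\deg)$.

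The first step is to apply the uniform integrability result on subvarieties of bounded degree in $\mathbb{P}^N$. It gives a constant $C$, depending only on $N,n,m^n$, such that
\begin{align*}
\int_{X_s} e^{-\frac{1}{n\deg}(u-\sup_{X_s}u)}\,\omega_{\rm FS}^n \leq C\, e^{C\,\|u\|_{\ast}},
\end{align*}
where $\|u\|_\ast$ measures the gap between $\sup_{X_s} u$ and a normalization such as $\int_{X_s} u\,\omega_{\rm FS}^n$ or the value of a Lelong-type/extension normalization. The most natural route is to extend $u$ to an $\omega_{\rm FS}$-psh function on $\mathbb{P}^N$ (or to bound it by a logarithmic-growth function on $\mathbb{C}^{N}$) and use the fact that the Lelong numbers of the push-forward current are at most $\deg$.

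The second step is to control the normalization gap by $C\lambda(s)$. This is the main obstacle, because the fibers degenerate and the Fubini-Study geometry of $X_s$ does not have uniformly bounded "$\sup$ vs.\ mean" constant. I would first try a comparison with the $\lambda$-rescaled picture. The form $\omega_s=\lambda^{-1}\tilde\omega_s$ satisfies the uniform H\"ormander-Tian inequality of \cite[Proposition 1.1]{Cao26}, and $\lambda^{-1}\tilde u_s\in\PSH(X_s,\omega_s)$. By Lemma~\ref{auxiliary_function_lemma}-type arguments (Jensen applied to the Cao inequality) this yields
\begin{align*}
\sup_{X_s}\tilde u_s - \frac{1}{V_{\tilde\omega_s}}\int_{X_s}\tilde u_s\,\tilde\omega_s^n \leq \frac{\lambda(s)}{\alpha_0}\log A_0 .
\end{align*}
The mean value is therefore within $O(\lambda(s))$ of the supremum. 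Feeding this into the DNGG bound in mean-normalized form gives the factor $e^{C_3\lambda(s)}$.

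Alternatively, Cao's inequality can be used directly. Hölder interpolation between $e^{-\alpha_0\lambda^{-1}\tilde u_s}$ and the DNGG estimate, with the sup-normalization kept throughout, produces the same exponential-in-$\lambda$ loss. In either version, the final constant $C_3$ depends only on $N,n,m,\alpha_0,A_0$, which proves \eqref{Skoda_for_FS_inequality}.
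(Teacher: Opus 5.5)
Your proposal is correct and follows the paper's proof: apply \cite[Proposition 2.5]{DNGG23} in its mean-normalized form to $m\tilde u_s$ on the degree-$m^n$ subvariety $\iota(X_s)\subset\mathbb{P}^N$. Then obtain $-\int_{X_s}\tilde u_s\,\tilde\omega_s^n\le \alpha_0^{-1}\log(A_0)\,\lambda(s)$ from \cite[Proposition 1.1]{Cao26} applied to $\lambda(s)^{-1}\tilde u_s\in\PSH(X_s,\omega_s)$, combined with Jensen's inequality. The extension/Lelong-number speculation and the ``H\"older interpolation'' alternative are not needed; the latter could not by itself raise the exponent from $\alpha_0/\lambda(s)$ to $\frac{1}{nm^{n-1}}$ without the mean bound.
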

\begin{proof}
    Since $m\tilde{\omega}_s$ is the pull-back of a Fubini-Study metric, by \cite[Proposition 2.5]{DNGG23} (also see \cite[Section 3]{Zeriahi01}), we have
     \begin{align*}
        \int_{X_s} e^{\frac{-1}{nm^{n-1}}\tilde{u}_s} \;\tilde{\omega}_s^n \leq (4n)^n\cdot \exp\big(-\frac{m}{n}\int_{X_s}\tilde{u}_s \; \tilde{\omega}_s^n\big).        
    \end{align*}
    Since $\lambda(s)^{-1}\tilde{u}_s(\cdot,t)\in \PSH(X_s,\omega_{s})$, by \cite[Proposition 1.1]{Cao26}  and the uniform $L^\infty$-estimate,
    \begin{align*}
        \int_{X_s} e^{-\alpha \lambda(s)^{-1}\tilde{u}_s} \tilde{\omega}_s^n < A_0e^{\alpha C}.
    \end{align*}
    By Jensen's inequality, 
    \begin{align*}
        -\int_{X_s}\tilde{u}_s \; \tilde{\omega}_s^n \leq \alpha^{-1}\log(A_0e^{\alpha C}) \lambda(s).
    \end{align*}
    Thus there exists a uniform constant $C_3>0$ for which inequality \eqref{Skoda_for_FS_inequality} holds.
\end{proof}

\medskip

In the following, we will finish the proof of the inequality \eqref{main_thm_2_uniform_Skoda}. Let $\beta = \frac{1}{n m^{n-1}}$. Let $u_s$ be any function in $\PSH(X_s,\omega_{s,t})$ with $\sup_{X_s}u_s = 0$. Let 
\begin{align*}
    \tilde{v}_s = \lambda(s)\big(u_s+\varphi_s(\cdot,t)-\sup_{X_s}(u_s+\varphi_s(\cdot,t))\big) ,
\end{align*}
which is a function in $\in \PSH(X_s,\tilde{\omega}_s)$ with $\sup_{X_s} \tilde{v}_s = 0$.
By the uniform $L^\infty$-estimate,
$\|\varphi_s\|_{L^\infty}\leq C$. Let $q = \frac{\beta\lambda(s)}{\alpha}$.

When $q>1$, set $p = \frac{q}{q-1}$. Recall that $\rho_{s,t} = \frac{\tilde{\omega}_{s,t}^n}{\mu_s}$. Without loss of generality, we may assume $C_1,C_2,C_3\geq1$. Then
\begin{align*}
    \int_{X_s} e^{-\alpha u_s}\; \frac{\omega_{s,t}^n}{V_{\omega_{\omega_s}}} &= \int_{X_s} e^{-\alpha u_s} \rho_{s,t}\;\mu_s \\
    &\leq \Big(\int_{X_s}e^{-\alpha qu_s}\;\mu_s\Big)^{\frac{1}{q}}\cdot \Big(\int_{X_s}\rho_{s,t}^p\;\mu_s\Big)^{\frac{1}{p}} \\
    &\leq \Big(C_1|s|^{-2C_2}e^{2C\beta \lambda(s)}\int_{X_s} e^{-\beta\tilde{v}_s}\;\tilde{\omega}_s^n\Big)^{\frac{1}{q}}\cdot \Big(\int_{X_s} \rho_{s,t}^p\;\mu_s\Big)^{\frac{1}{p}} \\
    &\leq \Big(C_1C_3 \exp\big((2C_2+2C\beta+C_3)\lambda(s) \big) \Big)^{\frac{1}{q}}\cdot\Big(C_1^{\frac{1}{q}} \exp\big(  2C_2\lambda(s)q^{-1}  \big)\Big)\\
    &\leq C_1^2C_3\exp\big((4C_2+2C\beta+C_3)\frac{\alpha}{\beta}\big) =: C_4,
\end{align*}
where in the second line, we use H\"older's inequality; in the third line, we use the uniform $L^\infty$-estimate and inequality \eqref{volume_ratio_bound_inequality}; in the forth line, we use inequality \eqref{Skoda_for_FS_inequality} and inequality \eqref{L^p_bound_of_volume_ratio}.

When $q\leq 1$, we have $\lambda(s)\leq \frac{\alpha}{\beta}$ and obtain
\begin{align*}
    \int_{X_s} e^{-\alpha u_s}\; \frac{\omega_{s,t}^n}{V_{\omega_{\omega_s}}} &\leq e^{C\alpha}\int_{X_s}e^{-\alpha(u_s+\varphi_s(\cdot,t))}\rho_{s,t}\;\mu_s \\
    &\leq e^{C\alpha}\cdot C_1 e^{2C_2\lambda(s)}\cdot \int_{X_s} e^{-\alpha(u_s+\varphi_s(\cdot,t))}\; \mu_s \\
    &\leq C_1 A_\infty \exp\big(2C\alpha+2C_2\frac{\alpha}{\beta}\big) =: C_5,
\end{align*}
where in the first line, we use the uniform $L^\infty$-estimate; in the second line, we use inequality \eqref{volume_ratio_bound_inequality}; in the third line, we use the uniform $L^\infty$-estimate and \cite[Theorem 1.3]{LiYang24}.

Let $A = \max(C_4, C_5)+1$. Then we have
\begin{align*}
    \int_{X_s} e^{-\alpha u_s}\; \frac{\omega_{s,t}^n}{V_{\omega_{\omega_s}}} <A.
\end{align*}
This concludes the proof of Theorem \ref{main_thm_2}.

\section{Uniform diameter and volume estimates}
\label{Green's_function}

In this section, we prove Theorem \ref{main_thm_3}. 
In \cite{GPSS24,GPSSb24,GPSS23}, estimates for Green function and Sobolev inequality with respect to degenerate metrics were systematically studied and an effective approach was established. Based on \cite{GPSS24}, a uniform version of the estimates were developed in \cite{GV25}.
Following \cite{GPSS24,GV25}, we first establish Green function and local-volume estimates for $\omega_{s,t}$ in the family considered in Theorem \ref{main_thm_2}. We then use these estimates to obtain the uniform diameter bound and the Gromov precompactness result.
\medskip

Let $G_x^{\omega_{s,t}}$ be the Green function with respect to the metric $\omega_{s,t}$ and normalized volume $\frac{\omega_{s,t}^n}{V_{\omega_{s,t}}}$, which satisfies that
\begin{align}
    \begin{cases}
        \frac{1}{V_{s,t}}(\omega_{s,t}+\ddc G_x^{\omega_{s,t}})\wedge \omega_{s,t}^{n-1} = \delta_x , \\
        \int_X G_{x}^{\omega_{s,t}}\frac{\omega_{s,t}^n}{V_{\omega_{s,t}}}=0.
    \end{cases}
\end{align}

\begin{proposition}
    \label{Green_function_estimates}
    Let $\omega_{s,t}$ be the family of K\"ahler metrics considered in Theorem \ref{main_thm_2}, where $s\in\mathbb{D}^*,t\in[0,+\infty]$. Fix $0<r_1<\frac{n}{n-1}$, $0<r_2<\frac{2n}{2n-1}$. Then there exist uniform constants $C_1,C_2,C_3>0$, such that for any $\omega_{s,t}$, we have
    \begin{align}
        \label{Green_function_estimates_inequalities}
        &\sup_{X_s} G_x^{\omega_{s,t}} \leq C_1, \\ \nonumber
        &\int_{X_s}|G_x^{\omega_{s,t}}|^{r_1}\frac{\omega_{s,t}^n}{V_{\omega_{s,t}}} \leq C_2, \\ \nonumber
        &\int_{X_s} |\nabla G_x^{\omega{s,t}}|_{\omega_{s,t}}^{r_2}\frac{\omega_{s,t}^n}{V_{\omega_{s,t}}}\leq C_3.
    \end{align}
\end{proposition}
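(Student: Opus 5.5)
The plan is to follow the Green-function machinery of \cite{GPSS24} in its uniform form \cite{GV25}. The key input for that machinery is a uniform entropy-type bound on the volume form, relative to a fixed reference, plus a uniform H\"ormander--Tian inequality. The reference here is the normalized measure $\omega_{s,t}^n/V_{\omega_{s,t}}$ itself, together with the background class $[\omega_s]$. Theorem \ref{main_thm_2} supplies exactly the second ingredient, uniformly in $s$ and $t$:
\[
\int_{X_s}e^{-\alpha(u-\sup u)}\frac{\omega_{s,t}^n}{V_{\omega_s}}<A, \qquad u\in\PSH(X_s,\omega_{s,t}).
\]
This replaces the assumption $\|e^F\|_{L^1\log^pL}$ bounded used in \cite{GPSS24}. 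Note that $V_{\omega_{s,t}}=V_{\omega_s}$, and the class is fixed along $t$.

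\emph{Step 1 (auxiliary equations).} Fix $x$ and set $G=G_x^{\omega_{s,t}}$. For $k\ge 1$, I will solve the auxiliary complex Monge--Amp\`ere equation on $X_s$ with respect to $\omega_{s,t}$:
\[
\frac{1}{V}(\omega_{s,t}+\ddc\psi_k)^n=\frac{\tau_k(-G-\Lambda)}{A_k}\,\frac{\omega_{s,t}^n}{V},\qquad \sup\psi_k=0,
\]
where $\tau_k$ is a smooth approximation of $x\mapsto x_+$ and $A_k$ is the normalizing constant. Exactly as in Lemma \ref{auxiliary_function_lemma}, but using the elliptic energy argument or directly the H\"ormander--Tian inequality together with $\sup\psi_k=0$, we get $\int e^{-\alpha\psi_k}\omega_{s,t}^n/V\le A$.

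\emph{Step 2 (comparison).} I will then run the elliptic analogue of Lemma \ref{auxiliary_inequality_varphi_psi}. For the test function
\[
H=-\epsilon(-\psi_k+\Lambda)^{\frac{n}{n+1}}-(-G-\Lambda),
\]
the maximum principle applied with $\Delta_{\omega_{s,t}}G=-1$ away from $x$ (after regularizing $G$ near $x$) gives
\[
\frac{(-G-\Lambda)_+}{A_k^{1/(n+1)}}\le C\,(-\psi_k+C)^{\frac{n}{n+1}}.
\]
This yields exponential integrability of $(-G)_+^{(n+1)/n}$. A De Giorgi iteration as in Proposition \ref{L^infty_bound_by_energy} then gives control of $\int|G|$, with constants depending only on $n,\alpha,A$. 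Here $A_k$ is bounded by the $L^1$ norm of $G$, which is controlled by the normalization $\int G=0$ together with the lower bound obtained. The sup bound $\sup G\le C_1$ I expect to get from the representation formula: $G(x,\cdot)$ is $\omega_{s,t}$-subharmonic up to the normalizing constant, so we use $\int G=0$ together with the lower bound $\inf_y G_y\ge -C$ from the iteration, applied symmetrically. Equivalently, following \cite[Sect.~5]{GPSS24}, the $L^1$ bound of $G$ plus the Hessian-type comparison gives $\sup G\le C_1$.

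\emph{Step 3 (integral bounds).} The $L^{r_1}$ and $W^{1,r_2}$ bounds will follow as in \cite{GPSS24}. First, test the equation with powers of the truncations $(G-C_1)_-^{\delta}$ and use $\Delta G=-1$ to bound $\int|\nabla G|^2(C_1+1-G)^{-1-\delta}$. Then combine this with a level-set volume estimate $|\{-G>\lambda\}|\le C\lambda^{-n/(n-1)+\epsilon}$, which comes from the auxiliary equation with right-hand side supported on the sublevel set (again via H\"ormander--Tian). Finally, H\"older's inequality gives the thresholds $r_1<\frac{n}{n-1}$ and $r_2<\frac{2n}{2n-1}$.

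The main obstacle is Step 2: establishing the sup and level-set estimates with constants depending only on the uniform H\"ormander--Tian constants and not on the background metric. This is the case because $\omega_{s,t}$ is itself the reference metric, so no entropy term appears. I would handle this by citing the uniform version in \cite{GV25}, where the hypothesis is precisely a uniform $\alpha$-invariant type bound, which is inequality \eqref{main_thm_2_uniform_Skoda}.
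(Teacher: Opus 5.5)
\textbf{There is a genuine gap in Step 2, and the rest of your plan depends on it: your comparison is aimed at the side of $G$ where it is singular.} You use the paper's normalization $\frac{1}{V}(\omega_{s,t}+\ddc G_x)\wedge\omega_{s,t}^{n-1}=\delta_x$, so $\Delta G_x=-n$ away from $x$ (your ``$-1$''). The Dirac mass then enters $\Delta G_x$ with a positive sign, hence $G_x(y)\approx -c\,d(x,y)^{2-2n}\to-\infty$ as $y\to x$. So $(-G-\Lambda)_+$ is unbounded at the pole, and your inequality $(-G-\Lambda)_+\le C A_k^{1/(n+1)}(-\psi_k+C)^{n/(n+1)}$ cannot hold. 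Indeed, the density $\tau_k(-G-\Lambda)/A_k$ lies in $L^p$ for some $p>1$, because $G\in L^r$ for $r<\frac{n}{n-1}$. Hence $\psi_k$ is bounded, and your inequality would give $\inf G\ge -C$, which is false. Likewise $(-G)_+^{(n+1)/n}$ grows like a negative power of $d(x,\cdot)$ and is not exponentially integrable.

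The maximum principle fails exactly at the pole. There $\sup H=+\infty$ is attained. For a regularization one has $\Delta(-G_\delta)=n-n\rho_\delta$ with $\rho_\delta\to\delta_x$, which is very negative near $x$, so no contradiction arises. Consequently the following all rest on a false statement:
\begin{itemize}
\item your De Giorgi step;
\item your bound on $A_k$ via ``the lower bound obtained'';
\item the ``lower bound $\inf_y G_y\ge -C$'' you invoke to get $\sup G\le C_1$.
\end{itemize}
Deferring to \cite{GV25} at the end does not repair the argument you actually wrote. The paper also says \cite{GV25} must be modified for a family satisfying only the uniform H\"ormander--Tian inequality.

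\textbf{What is needed instead.} The Monge--Amp\`ere comparison works only where the singular part of $\Delta G$ has the favorable sign, i.e.\ for functions with $\Delta u\ge -n$. This is the paper's Claim 1:
\begin{itemize}
\item solve $\frac1V(\omega_{s,t}+\ddc v)^n=\frac{1+u^+}{1+M}\frac{\omega_{s,t}^n}{V}$;
\item prove $1+u^+\le\epsilon(-v)^{n/(n+1)}$;
\item bound $v$ using the uniform H\"ormander--Tian inequality.
\end{itemize}
This gives $\sup u\le C(n+\int|u|)$, which applies to $G_x$.

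Claim 1 needs an independent $L^1$ bound on $G_x$. Note $\int|G_x|=2\int G_x^+$, so $\int G_x=0$ alone gives nothing. The $L^1$ bound comes by duality (Step 1 of \cite[Theorem 2.3]{GV25}) from Claim 2: a uniform $L^\infty$ bound for solutions of $|\Delta u|\le 1$, $\int u=0$. Claim 2 is proved via the auxiliary equation with density $e^{n\sqrt{\delta'}M^{-1}(v-u)}(1+\frac1n\Delta u)^n$ and the comparison principle.

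The integrability of $-G$ near the pole must also be obtained dually, never by a maximum principle on $-G$:
\begin{itemize}
\item Set $\mathcal G=G-C_1-1$, and solve $\frac1V(\omega_{s,t}+\ddc u)\wedge\omega_{s,t}^{n-1}=f_i\frac{\omega_{s,t}^n}{V}$, where $f_i$ is the normalized density $(-\mathcal G)^{\beta\beta_i}$.
\item Bound $u$ uniformly by comparing it with the Monge--Amp\`ere solution whose density is proportional to $(-\mathcal G)^{n\beta\beta_i}$; this uses $n\beta<1$.
\item Evaluate $u(x)=\int u\,(\omega_{s,t}+\ddc\mathcal G_x)\wedge\omega_{s,t}^{n-1}/V$. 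This yields $\int(-\mathcal G)^{\beta_{i+1}}\le C\int(-\mathcal G)^{\beta\beta_i}$, and finitely many iterations give any $r_1<\frac{n}{n-1}$.
\end{itemize}
Your Step 3 level-set estimate would have to be produced in this dual way as well. Once $\sup G\le C_1$ and the $L^{r_1}$ bound are in hand, your gradient step is in line with Step 4 of \cite[Theorem 2.3]{GV25}.
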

\begin{proof}
    The proof is essentially the same as the proof of \cite[Lemma 2.1, Proposition 2.2, Theorem 2.3]{GV25}, which is built on \cite[Section 5]{GPSS24}. Therefore, we will only sketch the modifications needed for our family of metrics that satisfy the uniform H\"ormander-Tian inequality.
    \medskip

    {\bf Claim 1:} There exists a uniform constant $C_{1,1}>0$ such that, for every continuous $\omega_{s,t}$-quasi-subharmonic function $u$ satisfying $\Delta_{\omega_{s,t}}u\geq -n$ and $\int_{X_s} u\frac{\omega_{s,t}^n}{V_{\omega_{s,t}}}=0$,  we have
    $\sup_{X_s} u \leq C_{1,1}\cdot (n+\int_{X_s}|u|\frac{\omega_{s,t}^n}{V_{\omega_{s,t}}})$.
    \medskip

    Let $v\in \PSH(X_s,\omega_{s,t})$ be an auxiliary function that satisfies the equation
    \begin{align*}
        \frac{1}{V_{\omega_{s,t}}}(\omega_{s,t}+\ddc v)^n = \frac{1+u^+}{1+M}\frac{\omega_{s,t}^n}{V_{\omega_{s,t}}}
    \end{align*}
    with $\sup_{X_s}v=-1$, where $u^+ = \tilde{\max}(u,0)$ is the convex regularized maximum and $M=\int_{X_s} u^+\frac{\omega_{s,t}^n}{V_{\omega_{s,t}}}$.
    By applying the maximum principle argument to $1+u^+-\epsilon(-v)^{\frac{n}{n+1}}$, where $\frac{\epsilon^{n+1}(\frac{n}{n+1})^n}{(1+\frac{n\epsilon}{n+1})^n} = (1+M)$,
    we have
    $1+u^+\leq \epsilon(-v)^{\frac{n}{n+1}}$.

      Meanwhile, the uniform H\"ormander-Tian inequality \eqref{main_thm_2_uniform_Skoda} implies that, for any fixed $p>1$, there exists a uniform constant $C_{1.2}>0$ such that
    \begin{align*}
        \int_{X_s}(-v)^{p}\frac{\omega_{s,t}^n}{V_{\omega_{s,t}}}\leq C(p)\int_{X_s}e^{-\alpha v}\frac{\omega_{s,t}^n}{V_{\omega_{s,t}}}\leq C_{1,2}.
    \end{align*}
    The right-hand-side of the equation above has a uniform $L^p$-bound. Then by the uniform $L^\infty$-estimate of the Monge-Amp\`ere equation (see for example \cite[Theorem 1.1]{DNGG23} or the proof of Theorem \ref{main_thm_1}), there exists a uniform constant $C_{1,3}>0$ such that $\|v\|_{L^\infty}\leq C_{1,3}$. This proves  {\bf Claim 1}.
    \medskip

    {\bf Claim 2:}
    There exists a uniform constant $C_{1,4}>0$ such that, for any continuous $\omega_{s,t}$-quasi-subharmonic function $u$ satisfying $|\Delta_{\omega{s,t}} u| \leq 1$ and $\int_{X_s} u\frac{\omega_{s,t}^n}{V_{\omega_{s,t}}}=0$, we have $\|u\|_{L^\infty}\leq C_{1,4}$.
    \medskip

    Let $\delta=\frac{1}{4(1+4n^2C_{1,1}^2)^2}$. By {\bf Claim 1}, it suffices to show that if $\int_{X_s}u\frac{\omega_{s,t}^n}{V_{\omega_{s,t}}} = 0$ and $|\Delta_{\omega_{s,t}}u|\leq \delta$, then $M=\int_{X_s}|u|\frac{\omega_{s,t}^n}{V_{s,t}}\leq C_{1,5}$ for some uniform constant $C_{1,5}>0$.

    Let $v\in\PSH(X_s,\omega_{s,t})$ be the solution of 
    \begin{align*}
        \frac{1}{V_{\omega_{s,t}}}(\omega_{s,t}+\ddc v)^n = e^{n\sqrt{\delta'}M^{-1}( v -u)}(1+\frac{1}{n}\Delta_{\omega_{s,t}}u)^n\frac{\omega_{s,t}^n}{V_{\omega_{s,t}}}.
    \end{align*}
    where $\delta' = \|\frac{1}{n}\Delta_{\omega_{s,t}} u\|_{L^\infty(X_s)}$.
    By the comparison principle \cite[Proposition 1.4]{GV25}, $v\leq u$. By the uniform H\"ormander-Tian inequality \eqref{main_thm_2_uniform_Skoda} and the uniform $L^\infty$-estimate for the Monge-Amp\`ere equation, there exists a uniform constant $C_{1,6}>0$ such that
    \begin{align*}
        {\rm Osc}_{X_s}(v) \leq C_{1,6}.
    \end{align*}
    The rest of the proof for {\bf Claim 2} follows exactly the same argument as in the proof of \cite[Proposition 2.2]{GV25}.
    \medskip

    By the same argument as step 1 in the proof of \cite[Theorem 2.3]{GV25}, we have $G_x^{\omega_{s,t}}\leq C_1$ for some uniform constant $C_1>0$.
    \medskip

    We next prove the second inequality of \eqref{Green_function_estimates_inequalities}.
    Let $\mathcal{G}_x^{\omega_{\omega_{s,t}}}=G_x^{\omega_{s,t}}-C_1-1$. Since $\int_{X_s}G_x^{\omega_{s,t}}\frac{\omega_{s,t}^n}{V_{\omega_{s,t}}}=0$ and $\sup_{X_s}G_x^{\omega_{s,t}}\leq C_1$, we have
    \begin{align*}
        \int_{X_s}\mathcal{G}_x^{\omega_{s,t}}\frac{\omega_{s,t}^n}{V_{\omega_{s,t}}}\leq C_{2,1}
    \end{align*}
    for some uniform constant $C_{2,1}>0$.
    Choose $0<\beta<\frac{1}{n}$ such that
    \begin{align*}
        r_1<\sum_{i=0}^{M}\beta^i<\frac{n}{n-1},
    \end{align*}
    where $M>0$ is a finite integer. Let $\beta_i = \sum_{0\leq k\leq i}\beta^k$. Then $\beta_{i+1} = 1+\beta\cdot\beta_i$. Let the $\omega_{s,t}$-quasi-subharmonic function $u$ be the solution of
    \begin{align*}
        \frac{1}{V_{\omega_{s,t}}}(\omega_{s,t}+\ddc u)\wedge\omega_{s,t}^{n-1} = \frac{(-\mathcal{G}_x^{\omega_{s,t}})^{\beta\cdot\beta_i}}{\int_{X_s}(-\mathcal{G}_x^{\omega_{s,t}})^{\beta\cdot\beta_i}\frac{\omega_{s,t}^n}{V_{\omega_{s,t}}}}\frac{\omega_{s,t}^n}{V_{\omega_{s,t}}},
    \end{align*}
    with $\int_{X_s}u\frac{\omega_{s,t}^n}{V_{\omega_{s,t}}}=0$. Assume for the moment that $\|u\|_{L^\infty}\leq C_{2,2}$ for some uniform constant $C_{2,2}>0$ that is also independent of $0\leq i<M$. Then by the equation above and integration by parts, we have
    \begin{align*}
        -C_{2,2}\leq u(x)=\int_{X_s}u\frac{(\omega_{s,t}+\ddc\mathcal{G}_x^{\omega_{s,t}})\wedge\omega_{s,t}^{n-1}}{V_{\omega_{s,t}}}=-\frac{\int_{X_s}(-\mathcal{G}_x^{\omega_{s,t}})^{\beta_{i+1}}\frac{\omega_{s,t}^n}{V_{\omega_{s,t}}}}{\int_{X_s}(-\mathcal{G}_x^{\omega_{s,t}})^{\beta\beta_{i}}\frac{\omega_{s,t}^n}{V_{\omega_{s,t}}}}+C_1+1.
    \end{align*}
    After iterating for $M$ times, we obtain
    \begin{align*}
        \int_{X_s}(-\mathcal{G}_x^{\omega_{s,t}})^{r_1}\frac{\omega_{s,t}^n}{V_{\omega_{s,t}}} \leq \big(\int_{X_s}(-\mathcal{G}_x^{\omega_{s,t}})^{\beta_{M}}\frac{\omega_{s,t}^n}{V_{\omega_{s,t}}}\big)^{\frac{r_1}{\beta_M}} \leq C_2
    \end{align*}
    for some uniform constant $C_2>0$. It remains to prove the uniform $L^\infty$-estimate for $u$.
    Let $v\in\PSH(X_s,\omega_{s,t})$ be the solution of 
    \begin{align*}
        \frac{1}{V_{\omega_{s,t}}}(\omega_{s,t}+\ddc v)^n = \frac{(-\mathcal{G}_x^{\omega_{s,t}})^{n\beta\beta_i}\frac{\omega_{s,t}^n}{V_{\omega_{s,t}}}}{\int_{X_s}(-\mathcal{G}_x^{\omega_{s,t}})^{n\beta\beta_i}\frac{\omega_{s,t}^n}{V_{\omega_{s,t}}}}
    \end{align*}
    with $\sup_{X_s}v = 0$.  Jensen's inequality and the uniform H\"ormander-Tian inequality \eqref{main_thm_2_uniform_Skoda} gives a uniform bound of $\sup_{X_s} v$. Since $\beta<\frac{1}{n}$, the right-hand-side of the equation above has a uniform $L^p$-bound for some $p>1$ with $n\beta p<1$. Then by the uniform H\"ormander-Tian inequality \eqref{main_thm_2_uniform_Skoda} and the uniform $L^\infty$-estimate for the Monge-Amp\`ere equation, there exists a uniform constant $C_{2,3}>0$ such that
    \begin{align*}
        -C_{2,3}\leq v \leq 0. 
    \end{align*}
    Then by the AM-GM inequality and the comparison principle, $u$ is bounded from below by $v$ up to multiplying and adding some uniform constants, which leads to the uniform $L^\infty$-estimate for $u$.
    \medskip

    The proof for the third inequality of \eqref{Green_function_estimates_inequalities} follows Step 4 of the proof of \cite[Theorem 2.3]{GV25}.
\end{proof}

We also have the following Sobolev estimates for the family $\omega_{s,t}$. The proof is exactly the same as that of \cite[Theorem 2.6]{GV25}, which is built on \cite[Theorem 2.1 and Lemma 6.2]{GPSS24}.

\begin{proposition}
    \label{Sobolev_estimates}
    Let $\omega_{s,t}$ be the family of K\"ahler metrics considered in Theorem \ref{main_thm_2}, where $s\in\mathbb{D},t\in[0,+\infty]$. Fix $1<r<\frac{n}{n-1}$. Then there exists a uniform constant $C_1>0$, such that for any $u\in W^{1,2}(X_s)$, we have
    \begin{align}
        \label{Sobolev_estimates_inequalities_1}
        \Big(\int_{X_s}|u-\bar{u}|^{2r}\frac{\omega_{s,t}^n}{V_{\omega_{s,t}}}\Big)^{\frac{1}{r}} \leq C_1\int_{X_s}|\nabla u|_{\omega_{s,t}}^2 \frac{\omega_{s,t}^n}{V_{\omega_{s,t}}},
    \end{align}
    where $\bar{u} = \int_{X_s}u\frac{\omega_{s,t}^n}{V_{\omega_{s,t}}}$.

    In addition, there exists a uniform constant $C_2>0$ such that, for any domain $\Omega\subset X_s$, any $u\in W^{1,2}_c(\Omega)$, we have
    \begin{align}
        \label{Sobolev_estimates_inequalities_2}
        \Big(\int_{\Omega}|u|^{2r}\frac{\omega_{s,t}^n}{V_{\omega_{s,t}}}\Big)^{\frac{1}{r}}\leq C_2 \big(1+\frac{V_{\omega_{s,t}}(\Omega)}{V_{\omega_{s,t}(X_s\setminus \Omega)}}\big)\int_{\Omega}|\nabla u|^2_{\omega_{s,t}}\frac{\omega_{s,t}^n}{V_{\omega_{s,t}}}.
    \end{align}
\end{proposition}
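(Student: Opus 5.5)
The plan is to follow the Green function route of \cite[Lemma 6.2, Theorem 2.1]{GPSS24} as adapted in \cite[Theorem 2.6]{GV25}. The only inputs are the uniform bounds of Proposition \ref{Green_function_estimates}: the upper bound $\sup G_x\le C_1$, the $L^{r_1}$-bound of $G_x$, and the $L^{r_2}$-bound of $\nabla G_x$. These bounds depend only on the uniform H\"ormander-Tian inequality \eqref{main_thm_2_uniform_Skoda} and the uniform $L^\infty$-estimates, so every constant below is independent of $(s,t)$. Throughout, write $d\mu_{s,t}=\omega_{s,t}^n/V_{\omega_{s,t}}$, which is a probability measure.

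\textbf{Step 1: the global inequality \eqref{Sobolev_estimates_inequalities_1}.}
\begin{enumerate}[label=(\roman*)]
\item Use the Green representation formula
\begin{align*}
u(x)-\bar u=\int_{X_s}\langle\nabla G_x,\nabla u\rangle_{\omega_{s,t}}\,d\mu_{s,t},
\end{align*}
up to a normalizing constant $n$, which comes from integrating $u$ against $(\omega_{s,t}+\ddc G_x)\wedge\omega_{s,t}^{n-1}/V$.
\item Take $|\cdot|$ and apply Cauchy--Schwarz with the weight $|\nabla G_x|$:
\begin{align*}
|u(x)-\bar u|^2\le \Big(\int|\nabla G_x|\,d\mu_{s,t}\Big)\Big(\int|\nabla G_x(y)|\,|\nabla u|^2(y)\,d\mu_{s,t}(y)\Big).
\end{align*}
The first factor is uniformly bounded by Proposition \ref{Green_function_estimates}, since $r_2>1$ and $\mu_{s,t}$ is a probability measure.
\item To reach the exponent $2r>2$, I expect the better route to be a Schur-type interpolation. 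Write $w=u-\bar u$ and $|w|^{2r}=|w|^{2r-2}|w|^2$. Then test the representation formula against $|w|^{2r-2}w$, or use the kernel-operator boundedness $T:L^1\to L^{r}$ with kernel $K(x,y)=|\nabla_yG_x(y)|$.
\item The point is that $\sup_y\int_x K(x,y)^{r_2}\,d\mu$ must be bounded as well. This needs symmetry of the Green function, $G_x(y)=G_y(x)$ up to the normalization, together with the uniform $L^{r_2}$-bounds in both variables.
\item This gives $\|w\|_{L^{2r}}^2\le C\|\nabla u\|_{L^2}^2$ for $r$ in the range permitted by $r_2<\frac{2n}{2n-1}$. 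It covers all $1<r<\frac{n}{n-1}$ after adjusting exponents, possibly by combining with the $L^{r_1}$-bound for $G_x$ applied to $u^2$-type quantities, as in \cite[Lemma 6.2]{GPSS24}.
\end{enumerate}

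\textbf{Step 2: the Dirichlet inequality \eqref{Sobolev_estimates_inequalities_2}.} Extend $u\in W^{1,2}_c(\Omega)$ by zero, apply \eqref{Sobolev_estimates_inequalities_1}, and control $\bar u$. Since $u=0$ on $X_s\setminus\Omega$, Hölder gives
\begin{align*}
|\bar u|\,\mu_{s,t}(X_s\setminus\Omega)\le \int_{X_s\setminus\Omega}|u-\bar u|\,d\mu_{s,t}\le \|u-\bar u\|_{L^{2r}}\,\mu(X_s\setminus\Omega)^{1-\frac{1}{2r}}.
\end{align*}
Then
\begin{align*}
\|u\|_{L^{2r}(\Omega)}\le\|u-\bar u\|_{L^{2r}}+|\bar u|\,\mu(\Omega)^{\frac{1}{2r}}.
\end{align*}
Squaring and using $\mu(X_s\setminus\Omega)\le 1$ yields the factor $1+\frac{\mu(\Omega)}{\mu(X_s\setminus\Omega)}$.

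\textbf{Main obstacle.} The hardest step is Step 1(iii)--(v): passing from pointwise Green representation bounds to the $L^{2r}$ Sobolev inequality with only integral bounds on $\nabla G$. I would first try the approach of \cite[Theorem 2.1]{GPSS24}, which uses symmetry of the Green function and a duality argument. Because Proposition \ref{Green_function_estimates} supplies exactly the uniform hypotheses that argument needs, the uniformity in $(s,t)$ should follow verbatim.
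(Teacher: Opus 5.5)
\textbf{There is a genuine gap in Step 1 (ii)--(iv).} Your overall route agrees with the paper, which proves this proposition only by appeal to \cite[Theorem 2.6]{GV25}, i.e.\ the Green-function method of \cite{GPSS24}. Your Step 2 reduction of \eqref{Sobolev_estimates_inequalities_2} to \eqref{Sobolev_estimates_inequalities_1} is also fine. (Its last step really uses $x^{1/r}\le 1+x$ and $(a+b)^2\le 2a^2+2b^2$, not $\mu_{s,t}(X_s\setminus\Omega)\le 1$.)

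The gap is the choice of kernel. In your representation formula the kernel is $K(x,y)=|\nabla_y G(x,y)|$, the gradient of $G_x$ at the \emph{evaluation} point $y$. Proposition~\ref{Green_function_estimates} only controls $\sup_x\int K(x,y)^{r_2}\,d\mu_{s,t}(y)$, where $d\mu_{s,t}=\omega_{s,t}^n/V_{\omega_{s,t}}$.

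Integrating your Cauchy--Schwarz bound in $x$, or running a Schur test, needs the other mixed bound $\sup_y\int K(x,y)^{r_2}\,d\mu_{s,t}(x)$; in (ii) it is needed already with exponent $1$. Symmetry $G(x,y)=G(y,x)$ does not supply this. It only turns $\nabla_yG(x,y)$ into a derivative of $G_y$ with respect to its \emph{pole}.

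In fact, up to dimensional constants, $\int|\nabla_yG(x,y)|\,d\mu_{s,t}(x)$ equals $\sup_{|g|\le 1}|\nabla v_g(y)|$, where $v_g=\int G(x,\cdot)g(x)\,d\mu_{s,t}(x)$ solves $\frac1n\Delta v_g=g-\bar g$. A uniform bound would therefore be a uniform pointwise gradient estimate for the Poisson equation with bounded right-hand side. Nothing in the paper gives such a Lipschitz-type bound for $\omega_{s,t}$. So the $L^2\to L^{2r}$ bound in (iv)--(v) is unjustified as written.

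\textbf{The fix: apply symmetry to $G$ itself.} Use the $L^{r_1}$ bound of Proposition~\ref{Green_function_estimates} with $r_1=r$; the gradient bound is not needed. For $f$ with $\int f\,d\mu_{s,t}=0$, set $w(y)=\int G(x,y)f(x)\,d\mu_{s,t}(x)$, so that $\frac1n\Delta w=f$ and
\begin{align*}
\int (u-\bar u)f\,d\mu_{s,t}=-\frac1n\int\langle\nabla u,\nabla w\rangle\,d\mu_{s,t},\qquad \frac1n\int|\nabla w|^2\,d\mu_{s,t}=-\int wf\,d\mu_{s,t}.
\end{align*}
The kernel $|G|$ is symmetric with $\sup_x\|G(x,\cdot)\|_{L^{r}(\mu_{s,t})}\le C$. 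Young's inequality for integral kernels therefore gives $\|w\|_{L^{2r}}\le C\|f\|_{L^{(2r)'}}$, because $1+\frac{1}{2r}=\frac1r+\frac{1}{(2r)'}$. Hence $\|\nabla w\|_{L^2}^2\le C\|f\|_{L^{(2r)'}}^2$, and so $|\int(u-\bar u)f\,d\mu_{s,t}|\le C\|\nabla u\|_{L^2}\|f\|_{L^{(2r)'}}$.

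Duality, after replacing a general $f$ by $f-\bar f$ (which at most doubles its norm), then yields \eqref{Sobolev_estimates_inequalities_1}. This also explains why the admissible range is exactly $r<\frac{n}{n-1}$, the range of $r_1$, rather than something read off from $r_2$. With Step 1 replaced by this argument, your Step 2 completes the proof.
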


\medskip

\begin{proof}[Proof of Theorem \ref{main_thm_3}]
    Let $d_{\omega_{s,t}}(x,y)$ denote the distance between the points $x,y\in X_s$. Assume that
    \begin{align*}
        {\rm diam}_{\omega_{s,t}}(X_s) = d_{\omega_{s,t}}(x_0,y_0)
    \end{align*}
    for $x_0,y_0\in X_s$. We have
    \begin{align*}
        d_{\omega_{s,t}}(z,y_0) &= \int_{X_s} d_{\omega_{s,t}}(x,y_0)\frac{(\omega_{s,t}+\ddc G^{\omega_{s,t}}_{z})\wedge\omega_{s,t}^{n-1}}{V_{\omega_{s,t}}} \\
        &= \int_{X_s}d_{\omega_{s,t}}(x,y_0)\frac{\omega_{s,t}^n}{V_{\omega_{s,t}}}-\frac{1}{n}\int_{X_s}\langle \nabla d_{\omega_{s,t}},\nabla G^{\omega_{s,t}}_{z}\rangle_{\omega_{s,t}}\frac{\omega_{s,t}^n}{V_{\omega_{s,t}}}
    \end{align*}
    When $z=y_0$, we have 
    \begin{align*}
        \int_{X_s}d_{\omega_{s,t}}(x,y_0)\frac{\omega_{s,t}^n}{V_{\omega_{s,t}}} = \frac{1}{n}\int_{X_s}\langle \nabla d_{\omega_{s,t}},\nabla G^{\omega_{s,t}}_{y_0}\rangle_{\omega_{s,t}}\frac{\omega_{s,t}^n}{V_{\omega_{s,t}}}\leq \frac{1}{n}\int_{X_s} |\nabla G^{\omega_{s,t}}_{y_0}|_{\omega_{s,t}}\frac{\omega_{s,t}^n}{V_{\omega_{s,t}}} \leq C_1
    \end{align*}
    for some uniform constant $C_1>0$. Then
    \begin{align*}
        d_{\omega_{s,t}}(x_0,y_0)\leq C_1+\int_{X_s}|\nabla G^{\omega_{s,t}}_{x_0}|\frac{\omega_{s,t}^n}{V_{\omega_{s,t}}} \leq 2C_1 =: C_d.
    \end{align*}

     We next prove the lower bound of the local volume growth. For any $x\in X_s$, let $\eta\in C^\infty(X_s)$ such that $\eta = 1$ on $\bar{B}_{\omega_{s,t}}(x,\frac{1}{2}r)$, $\eta = 0$ on $X_s\setminus B_{\omega_{s,t}}(x,r)$ and $|\nabla \eta|_{\omega_{s,t}}\leq \frac{C_2}{r}$ for some uniform constant $C_2>0$. First choose $z_1\in\partial B(x,r)$, where $\eta(z_1)=0$. Since $\gamma >2n$, $\frac{\gamma}{\gamma-1}<\frac{2n}{2n-1}$.
    Then
    \begin{align*}
        \int_{X_s}\eta \; d_{\omega_{s,t}}\frac{\omega_{s,t}^n}{V_{\omega_{s,t}}} &= \frac{1}{n}\int_{X_s}\langle \nabla(\eta d_{\omega_{s,t}}), \nabla G_{z_1}^{\omega_{s,t}}\rangle_{\omega_{s,t}}\frac{\omega_{s,t}^n}{V_{\omega_{s,t}}} \\
        &\leq \frac{1}{n}\Big(\int_{X_s}|\nabla G^{\omega_{s,t}}_{z_1}|^{\frac{\gamma}{\gamma-1}}\frac{\omega_{s,t}^n}{V_{\omega_{s,t}}}\Big)^{\frac{\gamma-1}{\gamma}}\cdot \big(\frac{1}{V_{\omega_{s,t}}}{\rm Vol}_{\omega_{s,t}}(B_{\omega_{s,t}}(x,r))\big)^{\frac{1}{\gamma}}\\
        &\leq C_3\cdot \big(\frac{1}{V_{\omega_{s,t}}}{\rm Vol}_{\omega_{s,t}}(B_{\omega_{s,t}}(x,r))\big)^{\frac{1}{\gamma}}.
    \end{align*}
    Next choose $z_2\in\partial B_{\omega_{s,t}}(x,\frac{1}{2}r)$, where $\eta(z_2)=1$. Then
    \begin{align*}
        \frac{1}{2}r &= \int_{X_s}\eta \; d_{\omega_{s,t}}\frac{\omega_{s,t}^n}{V_{\omega_{s,t}}}-\frac{1}{n}\int_{X_s} \langle \nabla(\eta d_{\omega_{s,t}}), \nabla G_{z_2}^{\omega_{s,t}}\rangle_{\omega_{s,t}}\frac{\omega_{s,t}^n}{V_{\omega_{s,t}}} \\
        &\leq C_4 \cdot \big(\frac{1}{V_{\omega_{s,t}}}{\rm Vol}_{\omega_{s,t}}(B_{\omega_{s,t}}(x,r))\big)^{\frac{1}{\gamma}}.
    \end{align*}
    Then inequality \eqref{main_thm_3_volume_bound} is proved by choosing $C_v = (\frac{1}{2C_4})^\gamma$.
    \medskip

    Finally, for any sufficiently small $\epsilon>0$, let $\{x_i\}_{1\leq i\leq M}$ be an $\epsilon$-dense subset, $B_{\omega_{s,t}}(x_i,\frac{1}{2}\epsilon)$ are disjoint geodesic balls. Then 
    \begin{align*}
        M\leq \frac{1}{\min_{1\leq i\leq M}\frac{1}{V_{\omega_{s,t}}}{\rm Vol}_{\omega_{s,t}}(x_i, \frac{1}{2}\epsilon)} \leq \frac{2^\gamma}{C_v \epsilon^\gamma}.
    \end{align*}
    Therefore, the Gromov-Hausdorff convergence statement follows from Gromov's precompactness theorem.
    
\end{proof}


\bibliography{HRF_ref}
	
\end{document}